\newtheorem{theorem}{Theorem}[section]
\newtheorem{lemma}[theorem]{Lemma}
\def\Z{{\mbox{\rm\kern.25em
\vrule width.03em height0.57ex depth0ex
\kern.033em
\vrule width.03em height1.52ex depth-0.96ex \kern-.338em Z}}}
\def\R{{\mbox{\rm I\kern-.22em R}}}
\def\N{{\mbox{\rm I\kern-.22em N}}}
\def\supp{{\rm supp}}
\def\sgn{{\rm sgn}}
\def\M{{\cal{M}}}
\def\n{{\bf n}}
\def\M{{\cal{M}}}
\def\r{{\cal{R}}}
\def\111{\gamma}
\def\be#1{\begin{equation}\label{#1}}
\def\bas{\begin{align*}}
\def\eas{\end{align*}}
\def\bi{\begin{itemize}}
\def\ei{\end{itemize}}
\newenvironment{proof}{\noindent {\bf Proof} }{\endprf\par}
\def \endprf{\hfill  {\vrule height6pt width6pt depth0pt}\medskip}
\def\emph#1{{\it #1}}
\title[Calder\'{o}n commutators and Cauchy integral III]{Calder\'{o}n commutators and the Cauchy integral on Lipschitz curves revisited III. Polydisc extensions}
\author{Camil Muscalu}
\address{Department of Mathematics, Cornell University, Ithaca, NY 14853}
\email{camil@@math.cornell.edu}
\begin{document}

\begin{abstract}
This article is the last in a series of three papers, whose scope is to give new proofs to the well known theorems of Calder\'{o}n, Coifman, McIntosh and Meyer \cite{calderon}, \cite{meyerc}, \cite{cmm}. Here we extend the results of the previous two papers
to the polydisc setting. In particular, we solve completely an open question of Coifman from the early eighties.
\end{abstract}

\maketitle

\section{Introduction}

The present article is a natural continuation of the previous \cite{camil2}, \cite{camil3} and is the last paper in the sequel. The goal of it is to show that
the method developed in these papers to give new proofs to the $L^p$ boundedness of the Calder\'{o}n commutators and the Cauchy integral on Lipschitz curves \cite{calderon}, \cite{meyerc}, \cite{cmm}, can be used to extend these
classical results to the {\it $n$-parameter polydisc} setting, for any $n\geq 2$.

Suppose that $F$ is an analytic function on a disc of a certain radius centered at the origin in the complex plane and $A$ a complex valued function in $\R^n$, so that $\frac{\partial^n A}{\partial x_1 ... \partial x_n}\in L^{\infty}(\R^n)$ with
an $L^{\infty}$ norm strictly smaller than the radius of convergence of $F$. Define the linear operator $C_{n, F, A}$ by the formula

\begin{equation}\label{1}
C_{n, F, A} f(x) := p.v. \int_{\R^n} f(x+t ) F \left( \frac{\Delta_{t_1}^{(1)}}{t_1} \circ ... \circ\frac{\Delta_{t_n}^{(n)}}{t_n} A(x)\right)
\frac{d t_1}{t_1} ... \frac{d t_n}{t_n}
\end{equation}
for functions of $n$ variables $f(x)$ for which the principal value integral exists, where $\Delta_s^{(i)}$ denotes the finite difference operator at scale $s$ in the direction of $e_i$, given by

$$\Delta_s^{(i)} B(x) := B(x + s e_i) - B(x)$$
and $e_1$, ..., $e_n$ is the standard basis in $\R^n$.

The main theorem we are going to prove is the following.

\begin{theorem}\label{main}
The operator $C_{n, F, A}$ extends naturally as a bounded linear operator from $L^p(\R^n)$ into $L^p(\R^n)$ for every $1<p<\infty$.
\end{theorem}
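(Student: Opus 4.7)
Write $F(z) = \sum_{d \geq 0} c_d z^d$ with radius of convergence $R$, and let $a := \partial_1 \cdots \partial_n A \in L^\infty(\R^n)$ with $\|a\|_\infty < R$. The plan is to expand
\[
C_{n,F,A} \;=\; \sum_{d\geq 0} c_d\, C_{n,d,A},
\]
where $C_{n,d,A}$ denotes the $d$-th polydisc Calder\'on commutator obtained from \eqref{1} with $F(z)=z^d$, and reduce Theorem \ref{main} to the quantitative estimate
\[
\|C_{n,d,A}\|_{L^p \to L^p} \;\leq\; C_{n,p}^{d+1}(d+1)^{\alpha_n}\, \|a\|_\infty^d
\]
for some polynomial growth exponent $\alpha_n$. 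This uniform control in $d$ is precisely what allows the Taylor series to be summed in operator norm, given the smallness hypothesis on $\|a\|_\infty$.

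To handle $C_{n,d,A}$, I would first rewrite each mixed divided difference via the fundamental theorem of calculus as an average
\[
\frac{\Delta_{t_1}^{(1)}\circ\cdots\circ\Delta_{t_n}^{(n)}}{t_1\cdots t_n}A(x)
\;=\; \int_{[0,1]^n} a\bigl(x + s_1 t_1 e_1 + \cdots + s_n t_n e_n\bigr)\, ds,
\]
and polarize to view $C_{n,d,A}$ as a $(d+1)$-linear operator in the inputs $(f, a, \ldots, a)$. The resulting symbol has a natural polydisc tensor-product structure, and I would perform a Littlewood--Paley wave-packet decomposition of it along each of the $n$ dyadic directions, mirroring the one-parameter decompositions from \cite{camil2}, \cite{camil3}. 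This discretizes $C_{n,d,A}$ into a sum of $(d+1)$-linear model operators indexed by $n$-rectangles $R = I_1 \times \cdots \times I_n$, with coefficients built from wave packets tensored across the $n$ parameters.

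The heart of the argument is then to bound these discrete polydisc model sums uniformly, with polynomial dependence on $d$. I would run the polydisc analogue of the \emph{size/energy} stopping-time analysis of \cite{camil2}, \cite{camil3}: sort the contributing rectangles into generic sub-collections on which the wave-packet coefficients satisfy good $\mathrm{BMO}$-type and $L^2$-type estimates, and on each sub-collection bound the sum by a product of $d+1$ factors, each involving either a shifted Hardy--Littlewood maximal function or a shifted Littlewood--Paley square function, tensored across the $n$ parameters. Summing over stopping generations and applying multi-parameter vector-valued inequalities then yields the desired $L^p$ bound, with the combinatorial cost of tracking $d+1$ simultaneous wave-packet families accounting for the polynomial factor $(d+1)^{\alpha_n}$.

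The main obstacle I anticipate is the multi-parameter stopping-time step. In the one-parameter setting of \cite{camil2}, \cite{camil3} the exceptional set at each stopping generation is a union of intervals, on whose complement one has clean Carleson/$\mathrm{BMO}$ control; in the polydisc setting it is an arbitrary open subset of $\R^n$ of finite measure, and one cannot simply iterate one-parameter arguments direction by direction. Adapting the Chang--Fefferman product-space decomposition of such open sets into maximal dyadic sub-rectangles to the wave-packet framework, and doing so uniformly in the number of inputs $d+1$, is the genuine technical heart of the extension to $n$-parameters. Once these polydisc model-sum estimates are established, summing the Taylor series $\sum_d c_d\, C_{n,d,A}$ produces Theorem \ref{main}.
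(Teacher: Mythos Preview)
Your reduction to polynomial bounds for $C_{n,d,A}$ and the tensor-product Littlewood--Paley decomposition are correct and match the paper. The serious gap is in how you obtain \emph{polynomial} (rather than exponential) dependence on $d$.

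You propose to bound each model piece ``by a product of $d+1$ factors, each involving either a shifted Hardy--Littlewood maximal function or a shifted Littlewood--Paley square function,'' and then attribute the resulting growth to a ``combinatorial cost'' of order $(d+1)^{\alpha_n}$. But a product of $d+1$ maximal/square function estimates, followed by H\"older across $d+1$ inputs, produces a bound of the form $C^{d+1}$, not $(d+1)^{\alpha_n}$. Nothing in a generic size/energy stopping-time argument run on $d+1$ simultaneous families will beat this exponential loss, and with exponential bounds the Taylor series $\sum_d c_d\,C_{n,d,A}$ does not converge unless $\|a\|_\infty$ is small---precisely the restriction the theorem is supposed to remove.

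The paper's mechanism for avoiding this is essential and absent from your sketch. After the Littlewood--Paley decomposition (which already costs only $O(d^2)$ pieces, since one only needs two $\Psi$-type factors per parameter), one uses the \emph{non-compact} Littlewood--Paley projections to get the pointwise bound $|f_j\ast\Phi_k|\le\|f_j\|_\infty$ with constant exactly $1$. Since in the application $f_2=\cdots=f_{d+1}=a\in L^\infty$, all but a fixed number $l$ of the inputs are frozen out \emph{for free}, reducing the problem to an $l$-linear model operator with $l$ independent of $d$. The remaining analysis---double Fourier expansion of the localized symbol with quadratically decaying coefficients, discretization, and the biparameter stopping-time of \cite{mptt:biparameter} (which uses hybrid maximal-square operators $MM$, $SS$, $MS$, $SM$ rather than Chang--Fefferman rectangles)---then produces bounds depending only on $l$, with merely logarithmic losses from the shifts. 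That is where the polynomial growth actually comes from: $O(d^2)$ pieces times an $l$-dependent constant.
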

This answers completely an open question of Coifman from the early eigthties \cite{j1}, \cite{j2}. The case when the $L^{\infty}$ norm of $\frac{\partial^n A}{\partial x_1 ... \partial x_n}$ is small and the generic $n=2$ case,
have been understood earlier by Journ\'{e} in \cite{j1} and \cite{j2} respectively. Our proof is quite different from the approach in \cite{j1}, \cite{j2} and works equally well in all dimensions. In fact, as we will describe in the last section of the paper,
much more can be proved in the same way. Not only the operators of (\ref{1}) are bounded, but also (for instance) those given by expressions of type

\begin{equation}\label{2}
f \rightarrow 
p.v. \int_{\R^4} f(x+t+s)
F\left(\frac{\Delta_{t_1}^{(1)}}{t_1}\circ\frac{\Delta_{t_2}^{(2)}}{t_2}\circ\frac{\Delta_{s_1}^{(1)}}{s_1}\circ\frac{\Delta_{s_2}^{(2)}}{s_2} A(x)\right)
\frac{d t_1}{t_1}\frac{d t_2}{t_2}\frac{d s_1}{s_1}\frac{d s_2}{s_2}
\end{equation}
and their natural generalizations. Of course, in (\ref{2}) one has to assume this time that $\frac{\partial^4 A}{\partial x_1^2 \partial x_2^2} \in L^{\infty}(\R^2)$. 
When $F(z) = z^d$ with $d\geq 1$ the operator in (\ref{1}) is the natural $n$-parameter extension of the $d$th Calder\'{o}n commutator, whereas 
for $F(z) = \frac{1}{1+i z}$ one obtains the $n$-parameter generalization of the Cauchy integral on Lipschitz curves \cite{calderon}, \cite{meyerc}, \cite{cmm}.

For simplicity, we shall denote from now on with $C_{n, d, A}$ the $n$-parameter $d$th Calder\'{o}n commutator. It is easy to observe that when $f(x)$ and $A(x)$ are particularly given by

$$f(x) = f_1(x_1) \cdot ... \cdot f_n(x_n)$$
and

$$A(x) = A_1(x_1)\cdot ... \cdot A_n(x_n)$$
one has

$$C_{n, d, A}f(x) = C_{1, d, A_1} f_1(x_1) \cdot ... \cdot C_{1, d, A_n} f_n(x_n).$$
To motivate the introduction of the operators $C_{n,F,A}$ one just has to recall the context in which the original Calder\'{o}n commutators appeared \cite{calderon}, \cite{calderon1}, \cite{meyerc}. If one tries to extend Calder\'{o}n's algebra to $\R^n$ and to 
include in it pseudodifferential operators containing partial derivatives, one is naturally led to the study of the operators in (\ref{1}) and their natural generalizations.

It is clear and very well known that to prove statements such as the one in Theorem \ref{main}, one needs to prove {\it polynomial bounds} for the corresponding Calder\'{o}n commutators $C_{n,d,A}$. More specifically, Theorem \ref{main} reduces to
the estimate

\begin{equation}\label{poli}
\|C_{n, d, A} f \|_p \leq C(n,d)\cdot C(p) \cdot \|f\|_p \cdot \left\|\frac{\partial^n A}{\partial x_1 ... \partial x_n}\right\|_{\infty}^d
\end{equation}
for any $f\in L^p$, where $C(n,d)$ grows at most polynomially in $d$ \footnote{This reduction is a simple consequence of the fact that if one writes the analytic function $F$ as a power series, the generic operator $C_{n,F,A}$ itself becomes
a series involving all the commutators $C_{n, d, A}$. The polynomial bounds are necessary for this series to be absolutely convergent. }.

The argument of \cite{j1} to prove the {\it small $L^{\infty}$ norm} theorem used an induction on the dimension $n$. We work instead directly in $\R^n$ and since our method is essentially similar in every dimension, to keep the technicalities to a minimum,
we chose for the reader's convenience to describe the proof of the main Theorem \ref{main} in the particular case of the plane $\R^2$. However, it will be clear that the same proof works equally well in every dimension.

So from now on $n=2$ and the goal is to prove the corresponding (\ref{poli}). The operators $C_{2, d, A}$ that we would like to understand, are given by

\begin{equation}\label{commutators}
C_{2, d, A} f(x) = p.v. \int_{\R^2} f(x+t) \left(\frac{\Delta_{t_1}^{(1)}}{t_1}\circ\frac{\Delta_{t_2}^{(2)}}{t_2} A(x) \right)^d \frac{d t_1}{t_1}\frac{d t_2}{t_2}.
\end{equation}
If $a:= \frac{\partial^2 A}{\partial x_1 \partial x_2}$, then one observes that

\begin{equation}\label{ave}
\frac{\Delta_{t_1}^{(1)}}{t_1}\circ\frac{\Delta_{t_2}^{(2)}}{t_2} A(x) = \int_{[0,1]^2} a (x_1 + \alpha t_1, x_2 + \beta t_2) d \alpha d \beta.
\end{equation}
As in \cite{camil3}, using (\ref{ave}) $d$ times, one can see that if $a$ and $f$ are Schwartz functions, the implicit limit in (\ref{commutators}) exists and can be rewritten as

\begin{equation}\label{multiplier}
\int_{\R^{2d+2}} m_{2,d}(\xi, \xi_1, ..., \xi_d, \eta,\eta_1, ..., \eta_d) 
\widehat{f}(\xi, \eta)
\widehat{a}(\xi_1, \eta_1) ...
\widehat{a}(\xi_d, \eta_d)\cdot
\end{equation}

$$
\cdot e^{2\pi i (x_1, x_2)\cdot [(\xi, \eta) + (\xi_1,  \eta_1) + ... + (\xi_d,  \eta_d)]}
d \xi d \xi_1 ... d\xi_d
d \eta d \eta_1 ... d \eta_d
$$
where

\begin{equation}\label{product}
m_{2,d}(\xi, \xi_1, ..., \xi_d, \eta,\eta_1, ..., \eta_d):= m_{1,d}(\xi, \xi_1, ..., \xi_d)\cdot m_{1,d}(\eta,\eta_1, ..., \eta_d)
\end{equation}
with $m_{1,d}(\xi, \xi_1, ..., \xi_d)$ and $m_{1,d}(\eta,\eta_1, ..., \eta_d)$ given by

$$\int_{[0,1]^d}\sgn (\xi + \alpha_1\xi_1 + ... +\alpha_d \xi_d) d \alpha_1  ... d\alpha_d$$
and 

$$
\int_{[0,1]^d}\sgn (\eta + \beta_1 \eta_1 + ... + \beta_d \eta_d) d \beta_1 ... d \beta_d$$
respectively. Because of the formula (\ref{multiplier}) $C_{2,d}$ can be seen as a $(d+1)$-linear operator. However, it is important to realize (as in \cite{camil3}) that even though its symbol $m_{2,d}$ has the nice {\it product structure} in (\ref{product}),
it is not a classical {\it bi-parameter} symbol, since $m_{1,d}$ itself is not a classical Marcinkiewicz H\"{o}rmander Mihlin multiplier \footnote{$m_{1,d}$ is of course the symbol of the one dimensional $d$th Calder\'{o}n commutator \cite{camil3}.}.
As a consequence of this fact, the general {\it polydisc Coifman Meyer theorem} proved in \cite{mptt:biparameter}, \cite{mptt:multiparameter} cannot be applied in this case. The strategy would be to combine the techniques of 
\cite{mptt:biparameter}, \cite{mptt:multiparameter} with the new ideas of \cite{camil2}, \cite{camil3} and to show that (together with some other logarithmical estimates that will be proved in this paper) they are enough to obtain the polynomial bounds of 
(\ref{poli}). Given these remarks, it would clearly be of great help for the reader, to be already familiar with our earlier arguments in \cite{camil2}, \cite{camil3}.

We will prove the following 

\begin{theorem}\label{main1}
Let $1< p_1, ..., p_{d+1} \leq \infty$ and $1\leq p < \infty$ be so that $1/p_1+ ... + 1/p_{d+1} = 1/p$. Denote by $l$ the number of indices $i$ for which $p_i\neq \infty$. The operator $C_{2,d}$ extends naturally as a $(d+1)$-linear operator bounded from
$L^{p_1}\times ... \times L^{p_{d+1}} \rightarrow L^p$ with an operatorial bound of type

\begin{equation}
C(d)\cdot C(l)\cdot C(p_1)\cdot ... \cdot C(p_{d+1})
\end{equation}
where $C(d)$ grows at most polynomially in $d$ and $C(p_i) = 1$ as long as $p_i=\infty$ for $1\leq i\leq d+1$.
\end{theorem}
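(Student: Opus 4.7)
The plan is to prove Theorem \ref{main1} by combining the shifted-wavelet discretization and polynomial-in-$d$ paraproduct machinery of \cite{camil2}, \cite{camil3} with the bi-parameter stopping time techniques of \cite{mptt:biparameter}, \cite{mptt:multiparameter}. The starting point is formula (\ref{multiplier}) together with the crucial product structure (\ref{product}): because $m_{2,d}$ factors as a tensor product $m_{1,d}\otimes m_{1,d}$, we may apply to each factor separately the exact one-dimensional decomposition of the shifted symbol $m_{1,d}$ developed in \cite{camil3}. This yields a representation of $C_{2,d}$ as a sum over dyadic rectangles $R = R_1\times R_2$ of bi-parameter discrete model operators of the shape
\be{modelsum}
\sum_{R=R_1\times R_2}\frac{1}{|R|^{(d-1)/2}}\prod_{i=1}^{d+1}\langle f_i,\schwartz^{i,\k_i^{(1)}}_{R_1}\otimes\schwartz^{i,\k_i^{(2)}}_{R_2}\rangle\cdot\schwartz^{d+2,\k^{(1)}}_{R_1}\otimes\schwartz^{d+2,\k^{(2)}}_{R_2}(x),
\ee
where each $\schwartz$-family is an $L^2$-normalized bump adapted to the $\k$-shift of the corresponding dyadic interval, exactly as in \cite{camil3}.

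The next step is to prove $L^{p_1}\times\cdots\times L^{p_{d+1}}\to L^p$ bounds for these bi-parameter model sums, with only a logarithmic loss in each shift parameter $\k_i^{(1)}$, $\k_i^{(2)}$. For fixed shift vectors I would run the bi-parameter stopping time argument from \cite{mptt:biparameter}: define bi-parameter sizes and energies relative to the shifted wavelet families, decompose the collection of rectangles into trees in each direction via a bi-parameter John--Nirenberg type inequality, and conclude with the generic tree estimate applied factor by factor. The key point here is that, thanks to (\ref{product}), the shifted tree estimate of \cite{camil3} can be used tensor-wise in each variable separately, producing $\log(1+|\k_i^{(1)}|)\log(1+|\k_i^{(2)}|)$ losses per shift. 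Summing over the shift tuples, the telescoping argument of \cite{camil3} then converts these logarithmic losses into the desired polynomial factor $C(d)$.

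The dependence $C(l)$ on the number of finite exponents enters through the generalized restricted-weak-type interpolation that underlies this kind of estimate. For each of the $l$ inputs with $p_i<\infty$ one builds an exceptional bi-parameter set $\Omega$ via the bi-parameter maximal and square functions, and one removes the trees whose top rectangles are mostly inside $\Omega$; the bound $C(l)$ simply counts how many such Calder\'{o}n--Zygmund-type decompositions one performs. Inputs with $p_i=\infty$ are handled by taking them out in $L^\infty$ without generating any factor, which is compatible with the tree estimates since the size of an $L^\infty$ function is trivially bounded by its norm. Multilinear interpolation between the resulting restricted-weak-type endpoints then produces the full range $1\leq p<\infty$ claimed in the theorem.

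The main obstacle is the heart of Step 2: controlling bi-parameter stopping time sums built from \emph{shifted} wavelet families. In the one-parameter case of \cite{camil3} the interaction between the shift $\k$ and the dyadic structure is delicate but local in one direction; in the bi-parameter case the issue is that the natural bi-parameter BMO and Carleson estimates must be proved for shifted bi-parameter structures, and the two directions cannot be cleanly separated during the stopping time argument (only during the final tree estimate). Verifying that the bi-parameter John--Nirenberg inequality survives the shift with only a logarithmic loss, so that afterwards the tensor decomposition (\ref{product}) can be used to reduce to the one-parameter shifted tree estimate, is the most technical part of the proof.
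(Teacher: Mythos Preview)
Your high-level strategy is correct and matches the paper: exploit the tensor structure (\ref{product}), import the one-parameter shifted decomposition of \cite{camil3} in each variable, obtain bi-parameter model sums indexed by shift parameters with only logarithmic losses, and sum. But your technical route through the model operators differs from the paper's in two concrete ways, and the second one is exactly the obstacle you flag.

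First, the paper removes the $L^\infty$ inputs \emph{before} discretizing, not at the tree-estimate stage. The point is to choose non-compact Littlewood--Paley pieces for the intermediate indices so that the pointwise ``perfect estimate'' $|f\ast\Phi_k|\leq\|f\|_\infty$ holds; this collapses the $(d+1)$-linear paraproduct to a genuinely $l$-linear one whose bounds are then proved \emph{independently of $d$}. Your model sum (\ref{modelsum}) is still $(d+1)$-linear, so your stopping-time argument would have to carry $d+1$ functions through and you would need to check carefully that the constants coming out of it depend only on $l$.

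Second, and more substantively, the paper does \emph{not} use sizes, energies, trees, or a bi-parameter John--Nirenberg inequality as in \cite{mptt:biparameter}. It follows instead the simpler level-set method of \cite{mptt:multiparameter}: attach to each position $j$ a shifted hybrid square--maximal operator $(M\!-\!S)_j$ (one of $M_{\n^1_j}M_{\n^2_j}$, $S_{\n^1_j}S_{\n^2_j}$, $M_{\n^1_j}S_{\n^2_j}$, $S_{\n^1_j}M_{\n^2_j}$), build the exceptional set from super-level sets of these operators, and run a rectangle selection argument. The entire logarithmic loss is then pushed into $L^p$ bounds for these shifted hybrid operators, which in turn reduce to a single new ingredient: a logarithmic-in-$n$ vector-valued Fefferman--Stein inequality for the shifted maximal function $M_n$. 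This completely sidesteps the shifted bi-parameter John--Nirenberg problem you identify as the main obstacle; the two parameters interact only through the hybrid operators, whose analysis factors cleanly via Fefferman--Stein. So your worry is legitimate for the route you chose, but the paper avoids it by taking the \cite{mptt:multiparameter} road rather than the \cite{mptt:biparameter} one.
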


The above Theorem \ref{main1} is the bi-parameter extension of the corresponding Theorem 1.1 in \cite{camil3}. If we assume it for a moment, we see that (\ref{poli}) follows from it by taking $p_1=p$ and $p_2= ... = p_{d+1} = \infty$.

To show Theorem \ref{main1} we will prove that for every $1\leq i \leq d+2$ and for every $\phi_1, ..., \phi_{d+1}$ Schwartz functions, one has

\begin{equation}\label{schwartz}
\|C_{2,d}^{\ast i} (\phi_1, ..., \phi_{d+1})\|_p \leq C(d)\cdot C(l)\cdot C(p_1)\cdot ... \cdot C(p_{d+1})\cdot \|\phi_1\|_{p_1} \cdot ... \cdot \|\phi_{d+1}\|_{p_{d+1}}
\end{equation}
where $(p_j)_{j=1}^{d+1}$ and $p$ are as before and $(C_{2,d}^{\ast i})_{i=1}^{d+2}$ are the adjoints of the multilinear operator $C_{2,d}$.\footnote{For symmetry, we aso use the notation $C_{2,d} = C_{2,d}^{\ast d+2}$.}
Standard density and duality arguments as in \cite{camil3}, allow then one to conclude that the estimates in (\ref{schwartz}) can be naturally extended to arbitrary products of $L^{p_j}$ and $L^{\infty}$ spaces.\footnote{The reader is also referred to
our earlier \cite{camil3} for an explanation of why does one need the larger set of estimates in (\ref{schwartz}) for $C_{2,d}$ and its adjoints, even though one is interested in the more particular (\ref{poli}).}

Our plan for the rest of the paper is as follows. In the next section, Section 2, we describe some discrete model operators whose analysis will play an important role in understanding (\ref{schwartz}). In Section 3 we prove that
the main estimates (\ref{schwartz}) can be reduced to a general theorem for the model operators. In Section 4 we prove the theorem for the discrete model operators of Section 2. In Section 5 we show
logarithmical bounds for some shifted Hardy-Littlewood-Paley hybrid operators, which appear naturally in the study of the previous discrete models. Finally, in Section 6 we describe various generalizations of the main
Theorem \ref{main}.

{\bf Acknowledgements:} The present work has been partially supported by the NSF.

\section{Discrete model operators}

As mentioned earlier, the main task here would be to describe some discrete model operators, whose analysis is deeply related to the analysis on (\ref{schwartz}). Because of the formula (\ref{multiplier}), we now know that

\begin{equation}\label{tensor}
C_{2,d} = C_{1,d}\otimes C_{1,d}
\end{equation}
and so one should not be at all surprised, to find out that these {\it bi-parameter} model operators that will be introduced, are in fact tensor products of the {\it one-parameter} discrete model operators of \cite{camil3}.
And also as in \cite{camil3}, these operators are not going to be $(d+1)$-linear, but $l$-linear instead, for some $1\leq l \leq d+1$. The explanation for this is similar to the one in \cite{camil3}. To be able to prove (\ref{schwartz}),
one first decomposes $C_{2,d}$ into polynomially (in $d$) many {\it bi-parameter paraproduct like pieces} and then estimate each such piece independently on $d$. To be able to achieve this, one has first to realize that one can estimate
most of the $L^{\infty}$ functions easily by their $L^{\infty}$ norms and reduce (\ref{schwartz}) in this way to the corresponding estimate for some {\it minimal} $l$-linear operators. To prove the desired bounds for these minimal operators,
one has to interpolate between some Banach and quasi-Banach estimates, as in \cite{camil3}. The Banach estimates are easy, but the quasi-Banach estimates are hard. One has to discretize the operators carefully, in order to understand them completely.
And this is (in a few words) how one arrives at the model operators. Their definition is as follows.

A smooth function $\Phi (x)$ of one variable is said to be a bump function adapted to a dyadic interval $I$, if and only if one has

$$\left|\partial^{\alpha}\Phi (x)\right| \lesssim \frac{1}{|I|^{\alpha}} \frac{1}{\left(1+\frac{\text{dist} (x, I)}{|I|}\right)^{M}}$$
for all derivatives $\alpha$ satisfying $|\alpha| \leq 5$ and any large $M>0$ with the implict constants depending on it. 
Then, if $1\leq q \leq \infty$, we say that $|I|^{-1/q} \Phi$ is an $L^q$ normalized bump adapted to $I$. The function $\Phi(x)$ is said to be of {\it $\Psi$ type} if $\int_{\R} \Phi(x) d x  = 0$,
otherwise is said to be of {\it $\Phi$ type}.

A smooth function $\Phi (x, y)$ of two variables is said to be a bump function adapted to the dyadic rectangle $R = I \times J$ if and only if it is of the form $\Phi (x, y) = \Phi_1(x) \cdot \Phi_2 (y)$ with $\Phi_1(x)$ adapted to $I$ and 
$\Phi_2(y)$ adapted to $J$. If $I$ is a dyadic interval and $\n$ an integer, we denote by $I_\n := I + n |I|$ the dyadic interval having the same length as $I$ but sitting $\n$ units of length $|I|$ away from it.

Fix now $1\leq l \leq d+1$ and $\n_1 = (\n_1^1, \n_1^2), ..., \n_l = (\n_l^1, \n_l^2)$ arbitrary pairs of integers. Define also $\n_{l+1}:= (0,0)$. Consider families $(\Phi^j_{R_{\n_j}})_R$ for $1\leq j \leq l+1$ of $L^2$ normalized
bump functions adapted to dyadic rectangles $R_{\n_j} = I_{\n_j^1} \times J_{\n_j^2}$ where $R = I\times J$ runs inside a given finite collection $\r$ of dyadic rectangles in the plane.
Assume also that at least two of the families $(\Phi^j_{I_{\n_j^1}})_I$ for $1\leq j \leq l+1$ are of {\it $\Psi$ type} and that the same is true for the families $(\Phi^j_{J_{\n_j^2}})_J$ for $1\leq j \leq l+1$.

The discrete model operator associated to these families of functions is defined by

\begin{equation}\label{model}
T_\r (f_1, ..., f_l) = \sum_{R\in \r}
\frac{1}{|R|^{(l-1)/2}}
\langle f_1, \Phi^1_{R_{\n_1}}\rangle ...
\langle f_l, \Phi^l_{R_{\n_l}}\rangle
\Phi^{l+1}_R.
\end{equation}

The following theorem holds.

\begin{theorem}\label{discrete}
For any such a finite family of arbitrary dyadic rectangles, the $l$-linear operator $T_{\r}$ maps $L^{p_1}\times ... \times L^{p_l} \rightarrow L^p$ boundedly, for
any $1< p_1, ..., p_l < \infty$ with $1/p_1 + ... 1/p_l = 1/p$ and $0 < p < \infty$, with a bound of type

\begin{equation}
O\left(\prod_{j=1}^2 \log^2 <\n_1^j>\cdot ... \cdot \log^2 <\n_l^j>\right)
\end{equation}
where in general, $<m>$ simply denotes $2+ |m|$. And the implicit constants are allowed to depend on $l$.
\end{theorem}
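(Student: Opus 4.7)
The plan is to apply multilinear duality: pair $T_\r(f_1,\ldots,f_l)$ against a Schwartz test function $f_{l+1}$ and recast the theorem as an $(l+1)$-linear form estimate
\[
|\form_\r(f_1,\ldots,f_{l+1})|=\Bigl|\sum_{R\in\r}\frac{1}{|R|^{(l-1)/2}}\prod_{j=1}^{l+1}\langle f_j,\Phi^j_{R_{\n_j}}\rangle\Bigr|\lesssim C\prod_{j=1}^{l+1}\|f_j\|_{p_j},
\]
with $\n_{l+1}:=(0,0)$ (so that $\Phi^{l+1}_{R_{\n_{l+1}}}=\Phi^{l+1}_R$) and $C=\prod_{j=1}^{l}\log^2\langle\n_j^1\rangle\log^2\langle\n_j^2\rangle$, whenever $\sum_j 1/p_j=1$ and $1<p_j\leq\infty$. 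Finiteness of $\r$ lets me work with Schwartz inputs throughout. The strategy is to combine the bi-parameter stopping-time machinery of \cite{mptt:biparameter,mptt:multiparameter} with the one-parameter shifted analysis of \cite{camil3}, feeding in the hybrid shifted Hardy--Littlewood--Paley bounds that are the object of Section 5 to capture the polylogarithmic shift dependence.

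First I would treat the Banach regime $1<p_j<\infty$. The tensor-product structure $\Phi^j_{R_{\n_j}}=\Phi^j_{I_{\n_j^1}}\otimes\Phi^j_{J_{\n_j^2}}$, the $\Psi$-type cancellation hypothesis (at least two families per direction), and the $|R|^{-(l-1)/2}$ normalization identify $\form_\r$ as a bi-parameter shifted paraproduct. Introducing shifted sizes and energies that are bi-parameter analogues of the quantities used in \cite{camil3}, and running a Chang--Fefferman/John--Nirenberg type stopping procedure in each direction, yield an abstract interpolation inequality between shifted maximal and shifted square-function hybrids. The shift costs are paid one direction at a time, so one collects exactly one factor $\log^2\langle\n_j^i\rangle$ per direction per function; each such factor is manufactured by the Section 5 bounds for the relevant hybrid operator.

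Next, to extend the estimate into the quasi-Banach range $0<p<\infty$, I would follow the generalized restricted weak-type and multilinear interpolation route of \cite{camil3}. Given measurable sets $F_1,\ldots,F_l$ and $E_{l+1}$, I would construct a major subset $E'_{l+1}\subset E_{l+1}$ by removing the portion where the relevant bi-parameter shifted maximal and square hybrids applied to $\chi_{F_j}/|F_j|^{1/p_j}$ are large. On the complement, the stopping-time algorithm, now legitimately allowed to invoke the $L^\infty$ bounds delivered by the excision, produces
\[
|\form_\r(\chi_{F_1},\ldots,\chi_{F_l},\chi_{E'_{l+1}})|\lesssim C\prod_{j=1}^{l}|F_j|^{1/p_j}\,|E_{l+1}|^{1/p_{l+1}}.
\]
Multilinear Marcinkiewicz interpolation on this family of restricted weak-type estimates then upgrades them to the full strong-type claim throughout the range $0<p<\infty$.

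The principal obstacle is the bi-parameter stopping-time argument in the presence of shifts. In one parameter, \cite{camil3} obtains $\log^2\langle\n\rangle$ by iterating tree selections, paying a logarithmic factor per level, and controlling the orthogonality of shifted trees. In two parameters rectangles do not form a tree, so the clean Chang--Fefferman procedure fails and one must invoke Journ\'e-type covering in both parameters \emph{simultaneously}, while shifted selections in the $x$- and $y$-directions run in parallel. The shifted hybrid Hardy--Littlewood--Paley operators of Section 5, which interpolate between maximal and square-function viewpoints in a single direction while carrying a shift, are precisely the device that glues the two one-parameter shifted selections without paying a constant exponential in $l$; making this gluing quantitative and verifying that the final bound is a \emph{product} (not a sum of products) of logarithmic factors is the technical heart of the argument.
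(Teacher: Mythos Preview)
Your high-level framework---duality to an $(l+1)$-linear form, construction of a major subset $E'$ by excising the region where the shifted hybrid operators $(M{-}S)_j$ are large, and invoking the Section~5 logarithmic bounds---matches the paper. But your execution diverges from the paper's, and the divergence matters. The paper does \emph{not} use sizes/energies, tree selections, or any Journ\'e-type covering. Instead it runs, for each $j$, a direct level-set stopping time: define $\Omega^j_{s_j}=\{(M{-}S)_j f_j > C\cdot 2^{-s_j}\cdot\text{(log factors)}\}$ and let $\r^j_{s_j}$ collect those $R$ with $|R\cap\Omega^j_{s_j}|>\tfrac{1}{2l}|R|$ not already selected. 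The pigeonhole $|R\cap\bigcap_j\Omega^{j\,c}_{s_j-1}|>\tfrac{1}{2}|R|$ then lets one replace $|R|$ by the measure of this intersection and dominate the inner sum pointwise by $\prod_j (M{-}S)_j f_j$, after which one just integrates and sums the geometric series in $s_1,\dots,s_{l+1}$. No Chang--Fefferman or Journ\'e machinery enters.

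The technical device you are missing, and which explains why Journ\'e covering is unnecessary, is the compact-support decomposition (Lemma~\ref{desc} in the paper): write $\Phi^{l+1}_R=\sum_{\vec{k}} 2^{-1000\,l\,|\vec{k}|}\Phi^{l+1,\vec{k}}_R$ with $\supp\Phi^{l+1,\vec{k}}_R\subset 2^{\vec{k}}R$. One then builds the exceptional set in three layers, $\Omega_{-5|\vec{k}|}\subset\tilde\Omega_{-5|\vec{k}|}\subset\tilde{\tilde\Omega}_{-5|\vec{k}|}$, the last enlargement via $\{MM(\chi_{\tilde\Omega})>2^{-|\vec{k}|}\}$. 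If $R\subset\tilde\Omega_{-5|\vec{k}|}$ then automatically $2^{\vec{k}}R\subset\tilde{\tilde\Omega}_{-5|\vec{k}|}\subset\Omega$, so $\langle\chi_{E'},\Phi^{l+1,\vec{k}}_R\rangle=0$ and those rectangles contribute nothing. This kills precisely the ``deep'' rectangles that would otherwise force a Journ\'e argument; the surviving rectangles all satisfy $|R\cap\Omega_{-5|\vec{k}|}^c|\ge\tfrac{2l-1}{2l}|R|$, which is what launches the level-set stopping time. Your identification of Journ\'e covering as the principal obstacle signals that you have not located this trick; without it your sketch does not close, and with it the argument is considerably simpler than what you outline.
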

This theorem is the bi-parameter generalization of Theorem 3.1 in \cite{camil3}. As pointed out there, standard arguments based on scale invariance and interpolation, allows one to reduce the above Theorem \ref{discrete}
to the more precise statement that for every $f_j\in L^{p_j}$ with $\|f_j\|_{p_j} = 1$ and measurable set $E\subseteq \R^2$ of measure $1$, there exists a subset $E'\subseteq E$ with $|E'| \sim |E|$ so that

\begin{equation}\label{precise}
\sum_{R\in \r}
\frac{1}{|R|^{(l-1)/2}}
|\langle f_1, \Phi^1_{R_{\n_1}}\rangle| ...
|\langle f_l, \Phi^l_{R_{\n_l}}\rangle|
|\langle f_{l+1}, \Phi^{l+1}_R\rangle| \lesssim 
\prod_{j=1}^2 \log^2 <\n_1^j>\cdot ... \cdot \log^2 <\n_l^j>
\end{equation}
where $f_{l+1}:= \chi_{E'}$. As in \cite{camil3}, the fact that one looses only logarithmical bounds in the above estimates, will be of a crucial importance later on.

\section{Reduction to the model operators}

The goal of this section is to show that indeed (\ref{schwartz}) can be reduced to Theorem \ref{discrete} or more precisely to its weaker but more precise variant (\ref{precise}).
In particular, one can find here a description of all the ideas that are necessary to understand why it is possible to estimate the biparameter Calder\'{o}n commutators $C_{2,d}$ with bounds that grow at most
polynomially in $d$. 

The reader familiar with our previous work will realize that this section is in fact a {\it tensor product} of the corresponding section in \cite{camil3} with itself. As there, the first task is to decompose $C_{2,d}$ into
polynomially many {\it biparameter paraproduct like pieces} which will be studied later on.

\subsection*{Non-compact and compact Littlewood-Paley decompositions}

Let $\Phi(x)$ be a Schwartz function which is even, positive and satisfying $\int_{\R}\Phi(x) d x  = 1$. Define also $\Psi(x)$ by

$$\Psi(x) = \Phi(x) - \frac{1}{2} \Phi(\frac{x}{2})$$
and observe that $\int_{\R}\Psi(x) d x = 0$.

Then, as always, consider the functions $\Psi_k(x)$ and $\Phi_k(x)$ defined by $2^k\Psi(2^k x)$ and $2^k\Phi(2^k x)$ respectively, for every integer $k\in\Z$. Notice also that all the $L^1$ norms of $\Phi_k$ are equal to $1$.
Since $\Psi_k(x) = \Phi_k(x) - \Phi_{k-1}(x)$
one can see that

$$\sum_{k\leq k_0} \Psi_k = \Phi_{k_0}$$
and so

$$\sum_{k\in\Z} \Psi_k = \delta_0$$
or equivalently

\begin{equation}\label{a}
\sum_{k\in\Z} \widehat{\Psi}_k (\xi) = 1
\end{equation}
for almost every $\xi\in \R$. On the other hans, as observed in \cite{camil3}, since $\widehat{\Psi}(0) = \widehat{\Psi}'(0) = 0$ one can write $\widehat{\Psi}(\xi)$ as

$$\widehat{\Psi}(\xi) = \xi^2 \varphi(\xi)$$
for some other smooth and rapidly decaying function $\varphi$.

These are what we called the non-compact (in frequency) Littlewood-Paley decompositions. The compact ones are obtained similarly, the only difference being that instead of considering the Schwartz function
$\Phi$ before, one starts with another one having the property that $\supp \widehat{\Phi} \subseteq [-1,1]$ and $\widehat{\Phi}(0) = 1$.

As explained in \cite{camil3}, the advantage of the non-compact Littlewood-Paley projections is reflected in the perfect estimate

\begin{equation}\label{perfect}
\left|f\ast \Phi_k(x) \right| \leq \|f\|_{\infty}
\end{equation}
which plays an important role in the argument.

\subsection*{The generic decomposition of $C_{2,d}$}

Using (\ref{multiplier}), if $f, f_1, ..., f_{d+1}$ are all Schwartz functions, one can write the $(d+2)$-linear form associated to $C_{2,d}$ as

\begin{equation}\label{b}
\int_{\xi+\xi_1+ ... + \xi_{d+1} = 0 \atop \eta + \eta_1 + ... +\eta_{d+1} = 0}
\left( \int_{[0,1]^d}\sgn (\xi + \alpha_1\xi_1 + ... +\alpha_d \xi_d) d \alpha_1  ... d\alpha_d   \right)\cdot
\end{equation}
$$
\left( \int_{[0,1]^d}\sgn (\eta + \beta_1 \eta_1 + ... + \beta_d \eta_d) d \beta_1 ... d \beta_d \right)\cdot
$$

$$
\widehat{f}(\xi, \eta)
\widehat{f_1}(\xi_1, \eta_1) ...
\widehat{f_{d+1}}(\xi_{d+1}, \eta_{d+1})d \xi d \xi_1 ... d\xi_{d+1}
d \eta d \eta_1 ... d \eta_{d+1}.
$$

Then, by using the Littlewood-Paley decompositions in (\ref{a}) several times one can write

\begin{equation}\label{c}
1 = \sum_{l_0, l_1, ..., l_{d+1} \in \Z}
\widehat{\Psi}_{l_0}(\xi)
\widehat{\Psi}_{l_1}(\xi_1) ...
\widehat{\Psi}_{l_d}(\xi_d)
\widehat{\Psi}_{l_{d+1}}(\xi_{d+1}).
\end{equation}
As in \cite{camil3} since for every $(d+2)$ tuple $(l_0, l_1, ..., l_{d+1})\in \Z^{d+2}$ one has that either $l_0 \geq l_1, ..., l_{d+1}$ or $l_1\geq l_0, ..., l_{d+1}$ ... or $l_{d+1}\geq l_0, ..., l_d$,
fixing always the biggest parameter and summing over the rest of them, one can rewrite (\ref{c}) as

\begin{equation}\label{d}
\begin{array}{l}
\sum_{l}
\widehat{\Psi}_{l}(\xi)
\widehat{\Phi}_{l}(\xi_1) ...
\widehat{\Phi}_{l}(\xi_d)
\widehat{\Phi}_{l}(\xi_{d+1}) + \\
... + \\
\sum_{l}
\widehat{\Phi}_{l}(\xi)
\widehat{\Phi}_{l}(\xi_1) ...
\widehat{\Phi}_{l}(\xi_d)
\widehat{\Psi}_{l}(\xi_{d+1}).
\end{array}
\end{equation}

Also as in \cite{camil3}, we use in (\ref{d}) compact Littlewood-Paley decompositions for the $\xi$ and $\xi_{d+1}$ variables and non-compact ones for the rest of them.
Every single term in the decomposition (\ref{d}) contains only one $\Psi$ type of a function and we would like to have (at least) two. To be able to producte another one, one has to recall that $\xi+\xi_1+...+\xi_{d+1} = 0$.
Taking this into account, let us take a look at the second (for instance) term in (\ref{d}) in the particular case when $l=0$. We rewrite is for simplicity as

\begin{equation}\label{e}
\widehat{\Phi}(\xi)
\widehat{\Psi}(\xi_1) ...
\widehat{\Phi}(\xi_d)
\widehat{\Phi}(\xi_{d+1}).
\end{equation}
We know from before that $\widehat{\Psi}(\xi_1) = \xi_1^2 \widehat{\varphi}(\xi_1)$ and so we can write 

$$\widehat{\Psi}(\xi_1) = \xi_1 \widehat{\varphi}(\xi_1)(-\xi - \xi_2 - ... - \xi_{d+1}) = -\xi_1\xi\widehat{\varphi}(\xi_1) - \xi_1 \xi_2 \widehat{\varphi}(\xi_1) - ... -\xi_1 \xi_{d+1} \widehat{\varphi}(\xi_1).
$$
Using this in (\ref{e}) allows one to decompose it as another sum of $O(d)$ terms, containing this time two function of $\Psi$ type, since besides $\xi_1 \widehat{\varphi}(\xi_1)$ one finds now either a factor
of type $\xi \widehat{\Phi}(\xi)$ or of type $\xi_j\widehat{\Phi_j}(\xi_j)$ for some $j=2, ..., d+1$.

If one performs a similar decomposition for every scale $l\in \Z$ and each of the terms in (\ref{d}) one obtains a splitting of the function
$1_{\{\xi+\xi_1+...+\xi_{d+1}=0\}}$ as a sum of $O(d^2)$ expressions whose generic inner terms contain two functions of $\Psi$ type as desired.

Since we are this time in the biparameter setting, one has to decompose $1_{\{\eta+\eta_1+...+\eta_{d+1}=0\}}$ in a completely similar manner. Combining these two decompositions, allows us to rewrite the $(d+2)$ linear form of $C_{2,d}$ as

\begin{equation}\label{f}
\sum_{k_1, k_2\in \Z}
\int_{\xi+\xi_1+ ... + \xi_{d+1} = 0 \atop \eta + \eta_1 + ... +\eta_{d+1} = 0}
\left( \int_{[0,1]^d}\sgn (\xi + \alpha_1\xi_1 + ... +\alpha_d \xi_d) d \alpha_1  ... d\alpha_d   \right)\cdot
\end{equation}
$$
\left( \int_{[0,1]^d}\sgn (\eta + \beta_1 \eta_1 + ... + \beta_d \eta_d) d \beta_1 ... d \beta_d \right)\cdot
$$

$$
\widehat{\Phi_{k_1}^{1,0}}(\xi)
\widehat{\Phi_{k_1}^{1,1}}(\xi_1) ...
\widehat{\Phi_{k_1}^{1,d}}(\xi_d)
\widehat{\Phi_{k_1}^{1,d+1}}(\xi_{d+1})\cdot
$$

$$
\widehat{\Phi_{k_2}^{2,0}}(\eta)
\widehat{\Phi_{k_2}^{2,1}}(\eta_1) ...
\widehat{\Phi_{k_2}^{2,d}}(\eta_d)
\widehat{\Phi_{k_2}^{2,d+1}}(\eta_{d+1})\cdot
$$

$$
\widehat{f}(\xi, \eta)
\widehat{f_1}(\xi_1, \eta_1) ...
\widehat{f_{d+1}}(\xi_{d+1}, \eta_{d+1})d \xi d \xi_1 ... d\xi_{d+1}
d \eta d \eta_1 ... d \eta_{d+1},
$$
which completes our generic decomposition.

Recall that at least two of the families  $(\widehat{\Phi_{k_1}^{1,j}}(\xi_j))_{k_1}$ for $0\leq j\leq d+1$ are of $\Psi$ type and likewise at least two of the families 
$(\widehat{\Phi_{k_2}^{2,j}}(\eta_j))_{k_2}$ for $0\leq j\leq d+1$ are of $\Psi$ type as well. We denote those indices by $i_1, i_2$ and $j_1, j_2$ respectvely. There are several cases that one has to consider 
which correspond to the positions of these indices. We call an index {\it intermediate} if it is between $1$ and $d$ and {\it extremal} if it is either $0$ or $d+1$. In \cite{camil2} we essentially witnessed two cases.
Case $1$ was when at least one of the $\Psi$ positions corresponded to an intermediate index and Case $2$ was when both of the $\Psi$ positions were extremal. Since we now work in the biparameter setting, there are as a consequence
four possible cases of type Case $i$ $\otimes$ Case $j$ for $1\leq i, j\leq 2$.

\subsection*{Case $1$ $\otimes$ Case $1$}

Assume here that $i_1= j_1 = 0$ and $i_2 = j_2 = 1$. As mentioned earlier, the fact that $i_1=i_2 = 0$ is not important, they can be anywhere else in the interval $[0, d+1]$. Also, the fact that the intermediate indices $i_2$ and $j_2$ have been
chosen to be equal is not important either, but we chose them so for the simplicity of the notation. As in \cite{camil3} we would like now to expand the two implicit symbols in (\ref{f}).

As there, let us denote by $\widetilde{\xi}:= \xi + \alpha_2 \xi_2 + ... + \alpha_d \xi_d$ and by $\widetilde{\eta}:= \eta + \beta_2 \eta_2 + ... + \beta_d\eta_d$ and recall from \cite{camil3} that the idea is to treat the first symbol of
(\ref{f}) as being dependent on the variables $\xi_1$ and $\widetilde{\xi}$ and similarly the second symbol of (\ref{f}) as being dependent on $\eta_1$ and $\widetilde{\eta}$. Also, since most of our functions do not have compact support in
frequency, we need to consider some other compact Littlewood-Paley decompositions. We first write as in \cite{camil3}

$$1 = \sum_{l_0, l_1}
\widehat{\Psi}_l(\widetilde{\xi}) \widehat{\Psi}_{l_1}(\xi_1) = \sum_{l_0<<l_1} ... + \sum_{l_0\sim l_1} ... + \sum_{l_0 >> l_1} ...
$$
which can be rewritten as

\begin{equation}\label{g}
\sum_{r_1}\widehat{\Phi}_{r_1}(\widetilde{\xi}) \widehat{\Psi}_{r_1}(\xi_1) + 
\sum_{r_1}\widehat{\Psi}_{r_1}(\widetilde{\xi}) \widehat{\Psi}_{r_1}(\xi_1) + 
\sum_{r_1}\widehat{\Psi}_{r_1}(\widetilde{\xi}) \widehat{\Phi}_{r_1}(\xi_1).
\end{equation}
Then, we consider an identical decomposition, but for the variables $\widetilde{\eta}$ and $\eta_1$ this time, where the summation is indexed over the parameter $r_2$. If we insert (\ref{g}) into (\ref{f}) it becomes a sum
of three distinct expressions that generate the subcases $1_a$, $1_b$ and $1_c$ respectively. If in addition one inserts the analogous formula of (\ref{g}) for the variables $\widetilde{\eta}$ and $\eta_1$ into (\ref{f})
as well, one ends up with nine {\it biparameter subcases} of type Case $1_a \otimes$ Case $1_a$, Case $1_a\otimes$ Case $1_b$ and so on.

\subsection*{Case $1_a \otimes$ Case $1_a$} To analyze the impact that these extra decompositions have, we consider for simplicity (as in \cite{camil3}) the particular term corresponding to $k_1=k_2 = 0$. However, the argument we use is
scale invariant.

Let us ignore the symbol in (\ref{f}) for now and just concentrate on the remaining expression which becomes

\begin{equation}\label{h}
\left(
\sum_{r_1}\left[\widehat{\Phi}_{r_1}(\widetilde{\xi}) \widehat{\Psi}_{r_1}(\xi_1)\right]
\widehat{\Phi_{0}^{1,0}}(\xi)
\widehat{\Phi_{0}^{1,1}}(\xi_1) ...
\widehat{\Phi_{0}^{1,d}}(\xi_d)
\widehat{\Phi_{0}^{1,d+1}}(\xi_{d+1})
\right)\cdot
\end{equation}

$$
\left(
\sum_{r_2}\left[\widehat{\Phi}_{r_2}(\widetilde{\eta}) \widehat{\Psi}_{r_2}(\eta_1)\right]
\widehat{\Phi_{0}^{2,0}}(\eta)
\widehat{\Phi_{0}^{2,1}}(\eta_1) ...
\widehat{\Phi_{0}^{2,d}}(\eta_d)
\widehat{\Phi_{0}^{2,d+1}}(\eta_{d+1})
\right) =
$$

$$\left( \sum_{r_1 \leq 0} ... +   \sum_{r_1 > 0} ... \right)\cdot$$

$$\left( \sum_{r_2 \leq 0} ... +   \sum_{r_2 > 0} ... \right) :=$$

$$\left( 1'_a + 1''_a\right) \otimes \left( 1'_a + 1''_a\right)$$
which allows us to split our existing subcase into four additional subcases.

\subsection*{Case $1'_a$ $\otimes$ Case $1'_a$} So this corresponds to the situation when both $r_1$ and $r_2$ are negative. As in \cite{camil3}, using the fact that
$\widehat{\Psi}_{r_1}(\xi_1)$ is compactly supported and given that $\widehat{\Phi_{0}^{1,1}}(\xi_1)$ is also of $\Psi$ type (in fact it is of the form $\xi_1\widehat{\varphi}(\xi_1)$) one can rewrite {\it the $\xi$ part} of (\ref{h}) as

$$\sum_{r_1\leq 0}
2^{r_1} 
\widehat{\Phi}_{r_1}(\widetilde{\xi}) 
\widehat{\Phi_{0}^{1,0}}(\xi)
\widehat{\Psi_{r_1}^{1,1}}(\xi_1) ... 
\widehat{\Phi_{0}^{1,d}}(\xi_d)
\widehat{\Phi_{0}^{1,d+1}}(\xi_{d+1}) =
$$

$$
\sum_{r_1\leq 0}
2^{r_1} 
\left[\widehat{\widetilde{\Phi}}_{r_1}(\widetilde{\xi})\widehat{\widetilde{\Psi}_{r_1}^{1,1}}(\xi_1)\right]\cdot
\widehat{\Phi_{0}^{1,0}}(\xi)
\widehat{\Psi_{r_1}^{1,1}}(\xi_1) ... 
\widehat{\Phi_{0}^{1,d+1}}(\xi_{d+1})\widehat{\Phi}_{r_1}(\widetilde{\xi})
$$
for naturally chosen compactly supported functions $\widehat{\widetilde{\Phi}}_{r_1}(\widetilde{\xi})$, $\widehat{\widetilde{\Psi}_{r_1}^{1,1}}(\xi_1)$ and $\widehat{\Psi_{r_1}^{1,1}}(\xi_1)$.

This allows us to split the symbol

$$
\left(\int_0^1 \sgn (\widetilde{\xi} + \alpha_1 \xi_1) d \alpha_1 \right)
\widehat{\widetilde{\Phi}}_{r_1}(\widetilde{\xi})\widehat{\widetilde{\Psi}_{r_1}^{1,1}}(\xi_1)
$$
as a double Fourier series of the form

\begin{equation}\label{i}
\sum_{\widetilde{n}, \widetilde{n_1} \in \Z}
C^{r_1}_{\widetilde{n}, \widetilde{n_1}} e^{2\pi i \frac{\widetilde{n}}{2^{r_1}} \widetilde{\xi}} \cdot e^{2\pi i \frac{\widetilde{n_1}}{2^{r_1}} \xi_1} 
\end{equation}
where the Fourier coefficients satisfy the quadratic estimates

\begin{equation}\label{ii}
\left|C^{r_1}_{\widetilde{n}, \widetilde{n_1}}\right| = \left|C_{\widetilde{n}, \widetilde{n_1}}\right| \lesssim \frac{1}{<\widetilde{n}>^2} \frac{1}{<\widetilde{n_1}>^{\#}}
\end{equation}
for an arbitrarily large number $\# > 0$. See \cite{camil3} for these important estimates.

Clearly, there are similar calculations that one can make for {\it the $\eta$ part} of (\ref{h}). Using both of them, one can see that the particular contribution of $1'_a\otimes 1'_a$ in (\ref{f}) (at scale $1$) becomes

$$\int_{[0,1]^{d-1}} \int_{[0,1]^{d-1}}
\sum_{r_1 \leq 0} 2^{r_1}
\sum_{r_2 \leq 0} 2^{r_2}
\sum_{\widetilde{n}, \widetilde{n_1}} C^{r_1}_{\widetilde{n}, \widetilde{n_1}}
\sum_{\widetilde{\widetilde{n}}, \widetilde{\widetilde{n_1}}} C^{r_2}_{\widetilde{\widetilde{n}}, \widetilde{\widetilde{n_1}}}
\int_{\xi+ \xi_1 + ... + \xi_{d+1} =0 \atop \eta + \eta_1 + ... + \eta_{d+1} = 0}
$$

$$\left[\widehat{\Phi_{0}^{1,0}}(\xi) e^{2\pi i \frac{\widetilde{n}}{2^{r_1}} \xi} \cdot \widehat{\Phi_{0}^{2,0}}(\eta) e^{2\pi i \frac{\widetilde{\widetilde{n}}}{2^{r_2}} \eta} \right]\cdot $$

$$\left[\widehat{\Psi_{r_1}^{1,1}}(\xi_1) e^{2\pi i \frac{\widetilde{n_1}}{2^{r_1}} \xi_1} \cdot \widehat{\Psi_{r_2}^{2,1}}(\eta_1) e^{2\pi i \frac{\widetilde{\widetilde{n_1}}}{2^{r_2}} \eta_1} \right]\cdot $$

$$\left[\widehat{\Phi_{0}^{1,2}}(\xi_2) e^{2\pi i \frac{\widetilde{n}}{2^{r_1}} \alpha_2 \xi_2} \cdot \widehat{\Phi_{0}^{2,2}}(\eta_2) e^{2\pi i \frac{\widetilde{\widetilde{n}}}{2^{r_2}} \beta_2 \eta_2} \right]\cdot $$

$$ ... $$

$$\left[\widehat{\Phi_{0}^{1,d}}(\xi_d) e^{2\pi i \frac{\widetilde{n}}{2^{r_1}} \alpha_d \xi_d} \cdot \widehat{\Phi_{0}^{2,d}}(\eta_d) e^{2\pi i \frac{\widetilde{\widetilde{n}}}{2^{r_2}} \beta_d \eta_d} \right]\cdot $$

$$\left[\widehat{\Phi_{0}^{1,d+1}}(\xi_{d+1}) \cdot \widehat{\Phi_{0}^{2,d+1}}(\eta_{d+1})\right]\cdot$$

\begin{equation}\label{j}
\left[\widehat{\Phi}_{r_1}(\widetilde{\xi}) \cdot \widehat{\Phi}_{r_2}(\widetilde{\eta})\right]\cdot
\end{equation}

$$\widehat{f}(\xi, \eta)
\widehat{f_1}(\xi_1, \eta_1) ...
\widehat{f_{d+1}}(\xi_{d+1}, \eta_{d+1})d \xi d \xi_1 ... d\xi_{d+1}
d \eta d \eta_1 ... d \eta_{d+1} d \alpha_2 ... d\alpha_d d \beta_2 ... d\beta_d.
$$

Now, if one fixes $\vec{\alpha}$, $\vec{\beta}$, $r_1$, $r_2$, $\widetilde{n}$, $\widetilde{n_1}$, $\widetilde{\widetilde{n}}$, $\widetilde{\widetilde{n_1}}$, the corresponding inner expresion in (\ref{j}) becomes

\begin{equation}\label{k}
\int_{\xi+ \xi_1 + ... + \xi_{d+1} =0 \atop \eta + \eta_1 + ... + \eta_{d+1} = 0}
\end{equation}

$$\left[\widehat{f}(\xi, \eta)\cdot \widehat{\Phi_{0}^{1,0}}(\xi) e^{2\pi i \frac{\widetilde{n}}{2^{r_1}} \xi} \cdot \widehat{\Phi_{0}^{2,0}}(\eta) e^{2\pi i \frac{\widetilde{\widetilde{n}}}{2^{r_2}} \eta} \right]\cdot $$

$$\left[\widehat{f_1}(\xi_1, \eta_1)\cdot \widehat{\Psi_{r_1}^{1,1}}(\xi_1) e^{2\pi i \frac{\widetilde{n_1}}{2^{r_1}} \xi_1} \cdot \widehat{\Psi_{r_2}^{2,1}}(\eta_1) e^{2\pi i \frac{\widetilde{\widetilde{n_1}}}{2^{r_2}} \eta_1} \right]\cdot $$

$$ ... $$

$$\left[\widehat{f_{d}}(\xi_{d}, \eta_{d})\cdot\widehat{\Phi_{0}^{1,d}}(\xi_d) e^{2\pi i \frac{\widetilde{n}}{2^{r_1}} \alpha_d \xi_d} \cdot \widehat{\Phi_{0}^{2,d}}(\eta_d) e^{2\pi i \frac{\widetilde{\widetilde{n}}}{2^{r_2}} \beta_d \eta_d} \right]\cdot $$

$$\left[\widehat{f_{d+1}}(\xi_{d+1}, \eta_{d+1})\cdot \widehat{\Phi_{0}^{1,d+1}}(\xi_{d+1}) \cdot \widehat{\Phi_{0}^{2,d+1}}(\eta_{d+1})\right]\cdot$$

$$\widehat{\Phi}_{r_1}(\xi + \alpha_2 \xi_2 + ... + \alpha_d \xi_d)\cdot \widehat{\Phi}_{r_2}(\eta+ \beta_2\eta_2 + ... + \beta_d\eta_d)$$

$$d \xi d \xi_1 ... d\xi_{d+1} d \eta d \eta_1 ... d \eta_{d+1}.$$
To be able to continue the calculations we need the following lemma.

\begin{lemma}\label{average}
If $F, F_1, ..., F_{d+1}, \widetilde{\Phi}, \widetilde{\widetilde{\Phi}}$ are all Schwartz functions, then one has

$$
\int_{\xi+ \xi_1 + ... + \xi_{d+1} =0 \atop \eta + \eta_1 + ... + \eta_{d+1} = 0}
\widehat{F}(\xi, \eta)
\widehat{F_1}(\xi_1, \eta_1) ...
\widehat{F_{d+1}}(\xi_{d+1}, \eta_{d+1})\cdot
$$

$$\widehat{\widetilde{\Phi}}(a \xi + a_1 \xi_1 + ... + a_{d+1} \xi_{d+1})\cdot
\widehat{\widetilde{\widetilde{\Phi}}}(b\eta + b_1 \eta_1 + ... + \beta_{d+1}\eta_{d+1})
d \xi d \xi_1 ... d\xi_{d+1} d \eta d\eta_1 ... d\eta_{d+1} =
$$

$$\int_{\R^4} F(x_1 - a t_1, x_2 - a t_2) F_1(x_1-a_1t_1, x_2-b_1t_2) ... F_{d+1}(x_1-a_{d+1}t_1, x_2 - b_{d+1}t_2)
\widetilde{\Phi}(t_1)\widetilde{\widetilde{\Phi}}(t_2) d x_1 d x_2 d t_1 d t_2
$$ 
for $a, a_1, ..., a_{d+1}, b, b_1, ..., b_{d+1}$ arbitrary real numbers.
\end{lemma}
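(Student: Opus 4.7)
The plan is to recognize the left-hand side as a spatial integral in disguise. The constraints $\sum_{j=0}^{d+1}\xi_j=0$ and $\sum_{j=0}^{d+1}\eta_j=0$ (writing $\xi_0=\xi$, $\eta_0=\eta$, $a_0=a$, $b_0=b$) are Fourier-analytic expressions of a diagonal relation in physical space, and the coupling symbols $\widehat{\widetilde\Phi}(\sum_j a_j\xi_j)$ and $\widehat{\widetilde{\widetilde\Phi}}(\sum_j b_j\eta_j)$ will be uncoupled by inverse Fourier transform. Concretely, I will introduce two pairs of auxiliary variables: the pair $(t_1,t_2)\in\mathbb R^2$ coming from Fourier inversion of $\widehat{\widetilde\Phi}$ and $\widehat{\widetilde{\widetilde\Phi}}$, and the pair $(x_1,x_2)\in\mathbb R^2$ coming from integral representations of the two $\delta$-type constraints. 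After the dust settles, the integrals in each $(\xi_j,\eta_j)$ pair decouple and collapse back via Fourier inversion to $F_j$ evaluated at the prescribed shifted point.

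Step one: write
$$\widehat{\widetilde\Phi}\Bigl(\sum_j a_j\xi_j\Bigr)=\int_{\mathbb R}\widetilde\Phi(t_1)\,e^{-2\pi i t_1\sum_j a_j\xi_j}\,dt_1,$$
and the analogous expression for $\widehat{\widetilde{\widetilde\Phi}}$ with $t_2$ and $b_j$, converting the two symbols into tensor products of phases in each $\xi_j$ and each $\eta_j$. Step two: encode the constraints through
$$\int_{\sum_j\xi_j=0}(\cdots)=\int_{\mathbb R^{d+2}}(\cdots)\int_{\mathbb R}e^{2\pi i x_1\sum_j\xi_j}\,dx_1,$$
and the analogous representation in the $\eta_j$ variables with $x_2$. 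Step three: apply Fubini to exchange the $(x_1,x_2,t_1,t_2)$ integrations with the frequency ones; for each $j$ one then recognizes
$$\int_{\mathbb R^2}\widehat{F_j}(\xi_j,\eta_j)\,e^{2\pi i\xi_j(x_1-a_j t_1)}\,e^{2\pi i\eta_j(x_2-b_j t_2)}\,d\xi_j\,d\eta_j=F_j(x_1-a_jt_1,\,x_2-b_jt_2)$$
by two-dimensional Fourier inversion. Multiplying the $d+2$ factors and reading off the remaining $\widetilde\Phi(t_1)\widetilde{\widetilde\Phi}(t_2)\,dx_1\,dx_2\,dt_1\,dt_2$ yields the right-hand side.

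The main obstacle, such as it is, is purely technical: justifying the formal use of the Dirac deltas together with the many interchanges of integration. Since all the $F_j$, $\widetilde\Phi$, $\widetilde{\widetilde\Phi}$ are Schwartz, the integrand in the large product $\mathbb R^{2(d+2)+4}$ is dominated by a product of Schwartz functions in distinct variables and Fubini applies directly. If one prefers to avoid delta-function notation altogether, the same identity is obtained by a standard mollification argument, replacing $\delta$ with $\widehat{G_\varepsilon}$ for a Gaussian $G_\varepsilon$ and letting $\varepsilon\to 0$, or equivalently by parameterizing the constraint hyperplanes with $\xi=-\sum_{j\geq 1}\xi_j$, $\eta=-\sum_{j\geq 1}\eta_j$ and then applying Fourier inversion in $(x_1,x_2)$ to the resulting product of Schwartz functions.
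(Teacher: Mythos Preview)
Your argument is correct and is exactly the natural one. Note that the paper does not actually give a proof of this lemma: it simply remarks that the statement is the biparameter extension of the corresponding one-parameter identity in \cite{camil3} and leaves the details to the reader. Your approach --- Fourier-inverting the two coupling symbols to introduce $(t_1,t_2)$, representing the two linear constraints via $(x_1,x_2)$-integrals, and then collapsing each $(\xi_j,\eta_j)$-integral by two-dimensional Fourier inversion --- is precisely the standard route and is what the omitted proof would contain.
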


This Lemma \ref{average} is the biparameter extension of Lemma $4.1$ in \cite{camil3} and since its proof doesn't use any new ideas it is left to the reader. As pointed out in \cite{camil3}
there is also a natural generalization of it, which states that the formula works for more than two averages (so one can take an arbitrary number of $\widetilde{\Phi}$ functions and another arbitrary number of
$\widetilde{\widetilde{\Phi}}$ ones).

As in \cite{camil3}, if $G$ is now an arbitrary Schwartz function and $a$ a real number we denote by $G^a$ the function defined by

$$\widehat{G^a}(\xi) = \widehat{G}(\xi) e^{2\pi i a \xi}.$$
Alternatively, one has $G^a(x) = G(x-a)$. Using the above lemma and this notation the previous (\ref{k}) can be rewritten as

$$
\int_{\R^4}
\left( f\ast \Phi_0^{1,0,\frac{\widetilde{n}}{2^{r_1}}}\otimes \Phi_0^{2,0,\frac{\widetilde{\widetilde{n}}}{2^{r_2}}}\right)(x_1-t_1, x_2 - t_2)\cdot
$$

$$\left(f_1\ast \Psi_{r_1}^{1,1,\frac{\widetilde{n_1}}{2^{r_1}}}\otimes \Psi_{r_2}^{2,1,\frac{\widetilde{\widetilde{n_1}}}{2^{r_2}}}\right)(x_1, x_2)\cdot$$    

$$\prod_{j=2}^d \left(f_j \ast \Phi_0^{1,j,\frac{\widetilde{n}}{2^{r_1}}\alpha_j}\otimes \Phi_0^{2,j,\frac{\widetilde{\widetilde{n}}}{2^{r_2}}\beta_j}\right)(x_1-\alpha_j t_1, x_2 - \beta_j t_2)\cdot$$

$$\left(f_{d+1}\ast \Phi_0^{1,d+1}\otimes \Phi_0^{2,d+1}\right)(x_1, x_2)\cdot$$

$$\Phi_{r_1}(t_1) \Phi_{r_2}(t_2) d t_1 d t_2 d x_1 d x_2 = $$

$$
\int_{\R^4}
\left( f\ast \Phi_0^{1,0,\frac{\widetilde{n}}{2^{r_1}}}\otimes \Phi_0^{2,0,\frac{\widetilde{\widetilde{n}}}{2^{r_2}}}\right)(x_1-\frac{t_1}{2^{r_1}}, x_2 - \frac{t_2}{2^{r_2}})\cdot
$$

$$\left(f_1\ast \Psi_{r_1}^{1,1,\frac{\widetilde{n_1}}{2^{r_1}}}\otimes \Psi_{r_2}^{2,1,\frac{\widetilde{\widetilde{n_1}}}{2^{r_2}}}\right)(x_1, x_2)\cdot$$    

$$\prod_{j=2}^d \left(f_j \ast \Phi_0^{1,j,\frac{\widetilde{n}}{2^{r_1}}\alpha_j}\otimes \Phi_0^{2,j,\frac{\widetilde{\widetilde{n}}}{2^{r_2}}\beta_j}\right)(x_1-\frac{\alpha_j t_1}{2^{r_1}}, x_2 - \frac{\beta_j t_2}{2^{r_2}})\cdot$$

$$\left(f_{d+1}\ast \Phi_0^{1,d+1}\otimes \Phi_0^{2,d+1}\right)(x_1, x_2)\cdot$$

$$\Phi_{0}(t_1) \Phi_{0}(t_2) d t_1 d t_2 d x_1 d x_2 = $$

\begin{equation}\label{l}
\int_{\R^4}
\left( f\ast \Phi_0^{1,0,\frac{\widetilde{n}-t_1}{2^{r_1}}}\otimes \Phi_0^{2,0,\frac{\widetilde{\widetilde{n}}-t_2}{2^{r_2}}}\right)(x_1, x_2)\cdot
\end{equation}

$$\left(f_1\ast \Psi_{r_1}^{1,1,\frac{\widetilde{n_1}}{2^{r_1}}}\otimes \Psi_{r_2}^{2,1,\frac{\widetilde{\widetilde{n_1}}}{2^{r_2}}}\right)(x_1, x_2)\cdot$$    

$$\prod_{j=2}^d \left(f_j \ast \Phi_0^{1,j,\frac{\widetilde{n}-t_1}{2^{r_1}}\alpha_j}\otimes \Phi_0^{2,j,\frac{\widetilde{\widetilde{n}}-t_2}{2^{r_2}}\beta_j}\right)(x_1, x_2)\cdot$$

$$\left(f_{d+1}\ast \Phi_0^{1,d+1}\otimes \Phi_0^{2,d+1}\right)(x_1, x_2)\cdot$$

$$\Phi_{0}(t_1) \Phi_{0}(t_2) d t_1 d t_2 d x_1 d x_2. $$
We have to remember now that all the calculations so far have been made under the assumption that $k_1=k_2 = 0$, but they can clearly be performed in general and then the analogous formula of (\ref{l}) is

\begin{equation}\label{m}
\int_{\R^4}
\left( f\ast \Phi_{k_1}^{1,0,\frac{\widetilde{n}-t_1}{2^{k_1+r_1}}}\otimes \Phi_{k_2}^{2,0,\frac{\widetilde{\widetilde{n}}-t_2}{2^{k_2+r_2}}}\right)(x_1, x_2)\cdot
\end{equation}

$$\left(f_1\ast \Psi_{k_1+r_1}^{1,1,\frac{\widetilde{n_1}}{2^{k_1+r_1}}}\otimes \Psi_{k_2+r_2}^{2,1,\frac{\widetilde{\widetilde{n_1}}}{2^{k_2+r_2}}}\right)(x_1, x_2)\cdot$$    

$$\prod_{j=2}^d \left(f_j \ast \Phi_{k_1}^{1,j,\frac{\widetilde{n}-t_1}{2^{k_1+r_1}}\alpha_j}\otimes \Phi_{k_2}^{2,j,\frac{\widetilde{\widetilde{n}}-t_2}{2^{k_2+r_2}}\beta_j}\right)(x_1, x_2)\cdot$$

$$\left(f_{d+1}\ast \Phi_{k_1}^{1,d+1}\otimes \Phi_{k_2}^{2,d+1}\right)(x_1, x_2)\cdot$$

$$\Phi_{0}(t_1) \Phi_{0}(t_2) d t_1 d t_2 d x_1 d x_2. $$
In conclusion, if one denotes by $\vec{\alpha} = (\alpha_2, ..., \alpha_d)$ and by $\vec{\beta} = (\beta_2, ..., \beta_d)$ one sees that the part of (\ref{f}) that corresponds to Case $1'_a\otimes$ Case $1'_a$ can be written as

\begin{equation}\label{n}
\int_{[0,1]^{d-1}} \int_{[0,1]^{d-1}}
\left(
\sum_{r_1 \leq 0} 2^{r_1}
\sum_{r_2 \leq 0} 2^{r_2}
\sum_{\widetilde{n}, \widetilde{n_1}} C^{r_1}_{\widetilde{n}, \widetilde{n_1}}
\sum_{\widetilde{\widetilde{n}}, \widetilde{\widetilde{n_1}}} C^{r_2}_{\widetilde{\widetilde{n}}, \widetilde{\widetilde{n_1}}}\cdot 
C_{2,d}^{r_1, r_2,\widetilde{n}, \widetilde{n_1}, \widetilde{\widetilde{n}}, \widetilde{\widetilde{n_1}}, \vec{\alpha}, \vec{\beta}, t_1, t_2}\right)\cdot
\end{equation}

$$
\Phi_{0}(t_1) \Phi_{0}(t_2) d t_1 d t_2 d \vec{\alpha} d \vec{\beta}
$$
where $C_{2,d}^{r_1, r_2,\widetilde{n}, \widetilde{n_1}, \widetilde{\widetilde{n}}, \widetilde{\widetilde{n_1}}, \vec{\alpha}, \vec{\beta}, t_1, t_2}$ is the operator whose $(d+2)$ linear form is given by summing over $k_1, k_2$ the inner expressions
of (\ref{m}).

To prove (\ref{schwartz}) for (\ref{n}) one would need to prove it for the operators $C_{2,d}^{r_1, r_2,\widetilde{n}, \widetilde{n_1}, \widetilde{\widetilde{n}}, \widetilde{\widetilde{n_1}}, \vec{\alpha}, \vec{\beta}, t_1, t_2}$ with bounds that are 
summable over $r_1, r_2,\widetilde{n}, \widetilde{n_1}, \widetilde{\widetilde{n}}, \widetilde{\widetilde{n_1}}$ and integrable over $\vec{\alpha}, \vec{\beta}, t_1, t_2$. It is clear that
these operators are essentially biparameter paraproducts and therefore one expects that the method of \cite{mptt:biparameter}, \cite{mptt:multiparameter} should be used. That indeed will be the case, but on the other hand the appearence of all these
parameters mentioned earlier, have the role to {\it shift} the implicit bump functions which appear in their definitions and as a consequence this time one has to be very precise, when evaluates the size of their boundedness constants.

Since in our case away from the indices $0$ and $1$ all the bump functions are of $\Phi$ type, the idea is to use the perfect estimate (\ref{perfect}) (or rather, its biparameter variant) to bound all the functions
which are in $L^{\infty}$. To be more specific, let us denote as in \cite{camil3} by $S$ the set of all the indices $2\leq j\leq d$ for which $p_j\neq \infty$. Set $l:=|S|+2$ and freeze all the $L^{\infty}$ normalized Schwartz functions
$f_j$ corresponding to the indices in $\{2, ..., d\}\setminus S$. The new resulted operator is a {\it minimal} $l$-linear operator which will be denoted by
$C_{2,d}^{l, r_1, r_2,\widetilde{n}, \widetilde{n_1}, \widetilde{\widetilde{n}}, \widetilde{\widetilde{n_1}}, \vec{\alpha}, \vec{\beta}, t_1, t_2}$.

\subsection*{Shifted hybrid maximal and square functions}

It is now time to recall a few basic facts about biparameter paraproducts, to be able to go further. Consider two generic families of $L^1$ normalized bump functions $(\Phi_{k_1}^j)_{k_1}$ and $(\Phi_{k_2}^j)_{k_2}$ for $1\leq j\leq l+1$
so that in each of them for two indices the corresponding sequences are of $\Psi$ type.

An $l$-linear biparameter paraproduct is an $l$-linear operator whose $(l+1)$-linear form is given by

\begin{equation}\label{o}
\int_{\R^2}
\sum_{k_1, k_2\in \Z} \prod_{j=1}^{l+1}\left(f_j\ast \Phi_{k_1}^j\otimes \Phi_{k_2}^j\right) (x_1, x_2) d x_1 d x_2.
\end{equation}

Let us first assume that we are in a case similar to the one considered before and that the $\Psi$ functions appear for the indices $j=1,2.$ Then, one can estimate the absolute value of (\ref{o}) by

$$\int_{\R^2}
SS(f_1)(x_1, x_2)\cdot SS(f_2)(x_1, x_2) \cdot \prod_{j=2}^{l+1} MM(f_j)(x_1, x_2) d x_1 d x_2
$$
where in general $MM(f)(x_1, x_2)$ and $SS(f)(x_1, x_2)$ are defined by

$$MM(f)(x_1, x_2) = \sup_{k_1, k_2} \left| f\ast \Phi_{k_1}\otimes \Phi_{k_2}(x_1, x_2)\right|$$
and

\begin{equation}\label{p}
SS(f)(x_1, x_2) = \left( \sum_{k_1, k_2} \left| f\ast \Phi_{k_1}\otimes \Phi_{k_2}(x_1, x_2)\right|^2\right)^{1/2}
\end{equation}
respectively. In order for (\ref{p}) to make sense, we assume of course that both $(\Phi_{k_1})_{k_1}$ and $(\Phi_{k_2})_{k_2}$ there, are of $\Psi$ type. Since both $MM$ and $SS$ are known to be bounded in every $L^p$ space for
$1<p<\infty$, the above argument proves that our particular biparameter paraproduct in (\ref{o}) is bounded from $L^{p_1}\times ... \times L^{p_l}$ into $L^p$ as long as
$1/p_1 + ... + 1/p_l = 1/p$ and $1< p_1, ..., p_l, p <\infty$.

As one can imagine, the above {\it $l^2\times l^{2}\times l^{\infty}$} argument can be {\it twisted}, in which case one naturally obtains {\it hybrid maximal and square functions} of type $MS$ and $SM$ defined by

\begin{equation}\label{r}
MS(f)(x_1, x_2) = \sup_{k_1} \left(\sum_{k_2}  \left| f\ast \Phi_{k_1}\otimes \Phi_{k_2}(x_1, x_2)\right|^2\right)^{1/2}
\end{equation}
and

\begin{equation}\label{s}
SM(f)(x_1, x_2) = \left(\sum_{k_1} \sup_{k_2} \left| f\ast \Phi_{k_1}\otimes \Phi_{k_2}(x_1, x_2)\right|^2\right)^{1/2} 
\end{equation}
respectively. One has to assume in (\ref{r}) that the family $(\Phi_{k_2})_{k_2}$ is of $\Psi$ type and that $(\Phi_{k_1})_{k_1}$ is of $\Psi$ type in (\ref{s}), for both expressions to make sense.

As observed in \cite{mptt:multiparameter} all these hybrid operators are bounded in $L^p$ for $1<p<\infty$ as well and as a consequence, one can bound every biparameter paraproduct in arbitrary
products of $L^p$ spaces, as long as all of their indices are strictly between $1$ and $\infty$.

This discussion shows that in order to understand the operator $C_{2,d}^{l, r_1, r_2,\widetilde{n}, \widetilde{n_1}, \widetilde{\widetilde{n}}, \widetilde{\widetilde{n_1}}, \vec{\alpha}, \vec{\beta}, t_1, t_2}$
(and of course, all the other possible ones) one has to understand how to bound not only the above operators, but also their {\it shifted} analogs of type $M_{n_1}M_{n_1}$, $S_{n_1}S_{n_2}$,
$M_{n_1} S_{n_2}$ and $S_{n_1} M_{n_2}$ which are defined similarly, but with respect to the {\it shifted} functions $(\Phi^{\frac{n_1}{2^{k_1}}}_{k_1})_{k_1}$ and 
$(\Phi^{\frac{n_2}{2^{k_2}}}_{k_2})_{k_2}$.

In \cite{camil2} we understood completely the one-parameter shifted maximal and square functions $M_n$ and $S_n$ and proved their boundedness on $L^p$ spaces with operatorial bounds of type
$O(\log <n>)$ \footnote {The logarithmical bounds for $M_n$ are due to Stein and can be found in \cite{stein} Chapter II.}.

Now the arguments of \cite{camil2} and \cite{mptt:multiparameter} show that their hybrid biparameter analogs mentioned before, will also be bounded on $L^p$ spaces with operatorial bounds of type
$O(\log^2 <n_1> \log ^2<n_2>)$, as long as one can prove logarithmic bounds for the so called Fefferman-Stein inequality, namely

\begin{equation}\label{ss}
\left\|
\left(\sum_{j=1}^N |M_n f_j|^2 \right)^{1/2}\right\|_p \leq C_p \log ^2 <n>\cdot
\left\|\left(
\sum_{j=1}^N |f_j|^2 \right)^{1/2}\right\|_p
\end{equation}
which should hold true for every $1<p<\infty$.

This inequality will be proven in detail in a later section. Until then, we will use freely all these logarithmic bounds.

\subsection*{Banach estimates for $C_{2,d}^{l, r_1, r_2,\widetilde{n}, \widetilde{n_1}, \widetilde{\widetilde{n}}, \widetilde{\widetilde{n_1}}, \vec{\alpha}, \vec{\beta}, t_1, t_2}$}

Given the logarithmic bounds for the shifted maximal and square functions described earlier, it is not difficult to see (as in \cite{camil3}) that the operator 
$C_{2,d}^{l, r_1, r_2,\widetilde{n}, \widetilde{n_1}, \widetilde{\widetilde{n}}, \widetilde{\widetilde{n_1}}, \vec{\alpha}, \vec{\beta}, t_1, t_2}$ is indeed bounded from
$L^{s_1}\times ... \times L^{s_l}$ into $L^s$ as long as $1/s_1+ ... + 1/s_l = 1/s$ and $1<s_1, ..., s_l, s <\infty$ with operatorial bounds no greater than

\begin{equation}\label{t}
\left(C <r_1> <r_2> \log <\widetilde{n}> \log<\widetilde{\widetilde{n}}> \log <\widetilde{n_1}> \log<\widetilde{\widetilde{n_1}}> \log <[t_1]> \log < [t_2]> \right)^{2 l}.
\end{equation}

And this contribution is perfect, given the extra factors $2^{r_1}, 2^{r_2}$ that appeared before (recall that both $r_1, r_2$ are negative in our case) and the quadratic decay in
$\widetilde{n}, \widetilde{n_1}, \widetilde{\widetilde{n}}, \widetilde{\widetilde{n_1}}$.

\subsection*{Quasi-Banach estimates for $C_{2,d}^{l, r_1, r_2,\widetilde{n}, \widetilde{n_1}, \widetilde{\widetilde{n}}, \widetilde{\widetilde{n_1}}, \vec{\alpha}, \vec{\beta}, t_1, t_2}$}

Assume now that the index $s$ above satisfies $0<s<\infty$ and so it can be sub-unitary. We would like to estimate the boundedness constants of

\begin{equation}\label{tt}
C_{2,d}^{l, r_1, r_2,\widetilde{n}, \widetilde{n_1}, \widetilde{\widetilde{n}}, \widetilde{\widetilde{n_1}}, \vec{\alpha}, \vec{\beta}, t_1, t_2} : L^{s_1}\times ... \times L^{s_l} \rightarrow L^s.
\end{equation}

This time one has to discretize the operators in the $x_1, x_2$ variables and then take advantage of the general result in Theorem \ref{discrete}. Arguing as in \cite{camil3} we see that the problem reduces to estimating
expressions of type

$$\frac{1}{2^{6r_1 l}}\frac{1}{2^{6r_2 l}} \sum_{R} \frac{1}{|R|^{(l-1)/2}}
\left|\langle f, \Phi^{1,0}_{I_{[\widetilde{n}-t_1]}}\otimes \Phi^{2,0}_{J_{[\widetilde{\widetilde{n}}-t_2]}} \rangle\right|\cdot
\left|\langle f_1, \Phi^{1,1}_{I_{\widetilde{n_1}}}\otimes \Phi^{2,1}_{J_{\widetilde{\widetilde{n_1}}}} \rangle\right|\cdot
$$

$$\prod_{j\in S} 
\left|\langle f_j, \Phi^{1,j}_{I_{[(\widetilde{n}-t_1)\alpha_j]}}\otimes \Phi^{2,j}_{J_{[(\widetilde{\widetilde{n}}-t_2)\beta_j]}} \rangle\right|\cdot
\left|\langle f_{d+1}, \Phi^{1,d+1}_I \otimes \Phi^{2,d+1}_J \rangle \right|
$$
where the sum runs over dyadic rectangles of the form $R=I\times J$. By applying Theorem \ref{discrete} we see that the operatorial norms of (\ref{tt}) can be majorized by

$$\left(2^{-6r_1} 2^{-6r_2}
\log <\widetilde{n}> \log<\widetilde{\widetilde{n}}> \log <\widetilde{n_1}> \log<\widetilde{\widetilde{n_1}}> \log <[t_1]> \log < [t_2]> \right)^{2 l}
$$
and the same is true for all its adjoint operators. In the end, by using the same interpolation argument as in \cite{camil3}, one can see that
the operator $C_{2,d}^{l, r_1, r_2,\widetilde{n}, \widetilde{n_1}, \widetilde{\widetilde{n}}, \widetilde{\widetilde{n_1}}, \vec{\alpha}, \vec{\beta}, t_1, t_2}$ satisfies the inequality (\ref{schwartz}) with bounds that are clearly acceptable in
(\ref{n}) as desired.

These complete the discussion of Case $1'_a \otimes$ Case $1'_a$. The rest of the cases can be treated similarly after certain adjustments. Since all of these adjustments have been described carefully in \cite{camil3}, the only thing that is left is
to realize that they work equally well in our {\it tensor product} framework. The straightforward (but quite delicate) details are left to the reader.

\section{Proof of Theorem \ref{discrete}}

The proof of Theorem \ref{discrete} is based on the method developed in \cite{mptt:biparameter} and \cite{mptt:multiparameter}. First, we need to recall the following lemma whose detailed proof can be found in \cite{mptt:multiparameter}.

\begin{lemma}\label{desc}
Let $J\subseteq \R$ be an arbitrary interval. Then, every bump function $\phi_J$ adapted to $J$ can be written as

\begin{equation}
\phi_J = \sum_{k\in\N} 2^{-1000\, l \,k} \phi^k_J
\end{equation}
where for each $k\in\N$, $\phi^k_J$ is also a bump adapted to $J$ but with the additional property that $\supp (\phi^k_J)\subseteq 2^k J$
\footnote{$2^k J$ is the interval having the same center as $J$ and whose length is $2^k |J|$.}.
Moreover, if we assume $\int_{R}\phi_J(x) dx = 0$ then all the functions $\phi^k_J$ can be chosen so that $\int_{\R}\phi^k_J(x) dx = 0$
for every $k\in \N$.
\end{lemma}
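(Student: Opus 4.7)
The plan is to build the decomposition from a smooth partition of unity on dyadic annuli around $J$. Fix an even cutoff $\chi\in C^\infty(\R)$ with $\chi\equiv 1$ on $[-1,1]$ and $\supp\chi\subseteq[-2,2]$, and, centering at the midpoint $c_J$ of $J$, set $\chi_k(x):=\chi\!\bigl(\tfrac{x-c_J}{2^k|J|}\bigr)$ for $k\in\N$. Define $\eta_0:=\chi_0$ and $\eta_k:=\chi_k-\chi_{k-1}$ for $k\geq 1$. Then $\sum_{k\geq 0}\eta_k\equiv 1$, $\supp\eta_0\subseteq 2J$, and for $k\geq 1$, $\supp\eta_k\subseteq 2^{k+1}J\setminus 2^{k-1}J$. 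Provisionally set $\phi_J^k:=2^{1000\,l\,k}\,\phi_J\,\eta_k$, so that $\phi_J=\sum_{k\in\N}2^{-1000\,l\,k}\phi_J^k$ is automatic. The content is to check that $\phi_J^k$ is still a bump adapted to $J$ with support in $2^{k+1}J$ (harmless up to relabelling $k\mapsto k+1$).

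For this adaptation, fix the ambient decay parameter $M$ in the hypothesis on $\phi_J$ to be some $M\gg 1000\,l$ (say $M=10000\,l$); this is allowed, since by assumption $\phi_J$ satisfies the bump estimates for \emph{any} large $M$. On $\supp\eta_k$ one has $\mathrm{dist}(x,J)\gtrsim 2^{k}|J|$, hence
\[
|\partial^\alpha\phi_J(x)|\lesssim |J|^{-\alpha}\,2^{-kM}\quad (|\alpha|\leq 5).
\]
The cutoff satisfies $|\partial^\beta\eta_k|\lesssim (2^k|J|)^{-|\beta|}\leq |J|^{-|\beta|}$. Leibniz then gives $|\partial^\alpha(\phi_J\eta_k)|\lesssim |J|^{-\alpha}2^{-kM}$, and multiplying by $2^{1000\,l\,k}$ leaves a factor $2^{-k(M-1000\,l)}$ which is $\leq(1+\mathrm{dist}(x,J)/|J|)^{-M'}$ for any prescribed $M'$, once $M$ is taken big enough. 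Thus each $\phi_J^k$ is a bump adapted to $J$ with the required support, with constants independent of $k$.

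The subtle point is preserving the mean-zero condition. Assuming $\int_\R\phi_J\,dx=0$, write $a_k:=\int\phi_J\eta_k\,dx$, so $\sum_k a_k=0$. The annular estimates above also give $|a_k|\lesssim |J|\,2^{-k(M-1)}$, so the tail sums $b_k:=\sum_{j\geq k}a_j$ are well defined, satisfy $b_0=0$, $b_k-b_{k+1}=a_k$, and $|b_k|\lesssim|J|\,2^{-k(M-1)}$. Choose once and for all a fixed bump $\rho$ adapted to $J$, supported in $J$, with $\int\rho=1$, and redefine
\[
\phi_J^k:=2^{1000\,l\,k}\bigl(\phi_J\eta_k\;-\;b_k\rho\;+\;b_{k+1}\rho\bigr).
\]
Then $\int\phi_J^k\,dx=2^{1000\,l\,k}(a_k-b_k+b_{k+1})=0$, and the modified series telescopes correctly:
\[
\sum_{k\in\N}2^{-1000\,l\,k}\phi_J^k=\sum_{k}\phi_J\eta_k+\rho\sum_{k}(b_{k+1}-b_k)=\phi_J-\rho\sum_k a_k=\phi_J.
\]
Finally, $\supp\rho\subseteq J\subseteq 2^k J$, and $2^{1000\,l\,k}|b_k|\lesssim 2^{-k(M-1-1000\,l)}$ is summable (and in particular $O(1)$) for $M$ large, so the correction terms $2^{1000\,l\,k}b_k\rho$ and $2^{1000\,l\,k}b_{k+1}\rho$ are themselves bumps adapted to $J$ with uniform constants. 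The main obstacle is precisely this mean-zero bookkeeping: one must absorb the $2^{1000\,l\,k}$ amplification into the decay $2^{-kM}$ of $\phi_J$, which forces the choice of $M$ depending on $l$, and one must arrange the telescoping so that both the mean-zero property of each $\phi_J^k$ and the reconstruction $\sum_k 2^{-1000\,l\,k}\phi_J^k=\phi_J$ hold simultaneously.
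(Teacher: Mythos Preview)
Your proof is correct and is precisely the standard argument for this decomposition: a smooth dyadic--annular partition of unity around $J$, followed by a telescoping mean-zero correction via a fixed compactly supported bump. The paper does not actually prove this lemma here; it only cites \cite{mptt:multiparameter} for the detailed proof, and the construction there is essentially the one you have written down.

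Two cosmetic remarks. First, as you yourself note, your pieces land in $2^{k+1}J$ rather than $2^kJ$; this is fixed by scaling the initial cutoff so that $\supp\chi_k\subseteq 2^kJ$ (or by an index shift), and does not affect the argument. Second, when you say ``fix $M\gg 1000\,l$'' and then later ``once $M$ is taken big enough'' for a prescribed $M'$: make clear that the \emph{construction} of $\phi_J^k$ is independent of $M$, and that for each target decay exponent $M'$ one simply invokes the hypothesis on $\phi_J$ with $M=M'+1000\,l+C$. Since the definition of ``bump adapted to $J$'' in the paper requires the estimates for \emph{every} large $M'$ (with constants allowed to depend on $M'$), this is exactly what is needed, and the resulting implicit constants are uniform in $k$.
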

Fix now the normalized functions $f_1, ..., f_l$ and the set $E$ as in (\ref{precise}). Using the above lemma, one can estimate the $(l+1)$- linear form on the left hand side of (\ref{precise}) as

\begin{equation}\label{split}
|\Lambda_{\r}(f_1, ..., f_{l+1})| \leq \sum_{\vec{k}\in\N^2} 2^{-1000 \, l \, |\vec{k}|}
\sum_{R\in \r}
\frac{1}{|R|^{(l-1)/2}}
|\langle f_1, \Phi^1_{R_{\n_1}}\rangle| ...
|\langle f_l, \Phi^l_{R_{\n_l}}\rangle|
|\langle f_{l+1}, \Phi^{l+1, \vec{k}}_R\rangle| 
\end{equation}
where the new functions $\Phi^{l+1,\vec{k}}_R$ have basically the same structure as the old $\Phi^{l+1}_R$
but they also have the additional property that $\supp ( \Phi^{l+1,\vec{k}}_R  )\subseteq 2^{\vec{k}}R$.
We denoted by $2^{\vec{k}}R:= 2^{k_1}I \times 2^{k_2}J$, $\vec{k}=(k_1, k_2)$ and 
$|\vec{k}|=k_1+k_2$.

As before, the form (\ref{split}) will be majorized later on by {\it tensorizing} two separate $l^2\times l^2\times l^{\infty}\times ... \times l^{\infty}$ estimates with respect to parameters $I$ and $J$.
As a consequence, for every index $1\leq j\leq l+1$ there are {\it hybrid square and maximal functions} naturally attached to that position which we denote by $(M-S)_j$. 
More specifically $(M-S)_j$ can be either the {\it discrete variant} of $M_{\n^1_j} M_{\n^2_j}$ or $S_{\n^1_j} S_{\n^2_j}$ or $M_{\n^1_j} S_{\n^2_j}$ or $S_{\n^1_j} M_{\n^2_j}$ depending on the positions of the corresponding $\Psi$ functions.
For simplicity, we do not write explicitly the dependence of these functions $(M-S)_j$ on the {\it shifting} parameters $\n_j$. Recall also that each of them comes with a boundedness constant which is no greater than
$O(\log^2 <\n_j^1> \log^2 <\n_j^2>)$\footnote{It is a {\it standard} fact that the continuous and the discrete variants of these operators behave similarly.}.

We construct now an exceptional set as follows. For each $\vec{k}\in\N^2$ define

\begin{equation}
\Omega_{-5|\vec{k}|}= \bigcup_{j=1}^l
\{ (x,y)\in\R^2 : (M-S)_j(f_j)(x,y)> C2^{5|\vec{k}|}  \log^2 <\n_j^1> \log^2 <\n_j^2>   \}.
\end{equation}
Also, define

\begin{equation}
\tilde{\Omega}_{-5|\vec{k}|}= \{ (x,y)\in\R^2 : MM(\chi_{\Omega_{-5|\vec{k}|}})(x,y) >\frac{1}{2 l} \}
\end{equation}
and then

\begin{equation}
\tilde{\tilde{\Omega}}_{-5|\vec{k}|}= \{ (x,y)\in\R^2 : MM(\chi_{\tilde{\Omega}_{-5|\vec{k}|}})(x,y) >\frac{1}{2^{|\vec{k}|}} \}.
\end{equation}
Finally, we denote by

$$\Omega = \bigcup_{\vec{k}\in\N^2}\tilde{\tilde{\Omega}}_{-5|\vec{k}|}.$$
It is clear that $|\Omega|< 1/2$ if $C$ is a big enough constant, which we fix from now on. Then, define $E':= E\setminus\Omega$ and observe that
$|E'|\sim 1$. 

Fix then $\vec{k}\in\N^2$ and look at the corresponding inner sum in (\ref{split}). We split it into two parts as follows.
Part I sums over those rectangles $R$ with the property that

\begin{equation}
R\cap\tilde{\Omega}_{-5|\vec{k}|}^c \neq \emptyset
\end{equation}
while Part II sums over those rectangles with the property that

\begin{equation}
R\cap\tilde{\Omega}_{-5|\vec{k}|}^c =\emptyset.
\end{equation}

We observe that Part II is identically equal to zero, because if $R\cap\tilde{\Omega}_{-5|\vec{k}|}^c \neq \emptyset$ then $R\subseteq\tilde{\Omega}_{-5|\vec{k}|}$
and in particular this implies that $2^{\vec{k}}R\subseteq \tilde{\tilde{\Omega}}_{-5|\vec{k}|}$ which is a set disjoint from $E'$. It is therefore enough to estimate Part I
only.

Since $R\cap\tilde{\Omega}_{-5|\vec{k}|}^c\neq\emptyset$, it follows that 
$\frac{|R\cap\Omega_{-5|\vec{k}|}|}{|R|}\leq \frac{1}{2 l}$ or equivalently,
$|R\cap\Omega^c_{-5|\vec{k}|}|> \frac{2 l -1}{2 l}|R|$. 

We are now going to describe $l+1$ decomposition procedures, one for each function $f_j$ for $1\leq j\leq l+1$. Later on, we will
combine them, in order to estimate our sum.

Independently, for every $1\leq j\leq l$, define

$$\Omega^j_{-5|\vec{k}|+1}= \{ (x,y)\in\R^2 : (M-S)_j(f_j)(x,y)> \frac{C 2^{5|\vec{k}|} \log^2 <\n_j^1> \log^2 <\n_j^2> }{2^1} \}$$
and set

$$\r^j_{-5|\vec{k}| +1}= \{ R\in \r : |R\cap\Omega^j_{-5|\vec{k}| +1}|>\frac{1}{2 l} |R| \},$$
then define

$$\Omega^j_{-5|\vec{k}| +2}= \{ (x,y)\in\R^2 : (M-S)_j(f_j)(x,y)> \frac{C 2^{5|\vec{k}|} \log^2 <\n_j^1> \log^2 <\n_j^2>  }{2^2} \}$$
and set

$$\r^j_{-5|\vec{k}| +2}= \{ R\in \r \setminus\r^j_{-5|\vec{k}| +1} : |R\cap\Omega^j_{-5|\vec{k}| +2}|>\frac{1}{2 l} |R| \},$$
and so on. The constant $C>0$ is the one in the definition of the set $E'$ above.
Since there are finitely many rectangles, this algorithm ends after a while, producing the sets $\{\Omega^j_{s_j}\}$
and $\{\r^j_{s_j}\}$ such that $\r =\cup_{s_j}\r_{s_j}$.

We would clearly like to have such a decomposition available for the last function $f_{l+1}$ as well. To do this, we first need to
construct the analogue of the set $\Omega_{-5|\vec{k}|}$, for it. Pick  $N>0$ a big enough integer such that for every
$R\in\r$ we have $|R\cap\Omega^{l+1 c}_{-N}|> \frac{2 l -1}{ 2 l} |R|$ where we defined

$$\Omega^{l+1}_{-N}= \{ (x,y)\in\R^2 : (M-S)_{l+1}(f_{l+1})(x,y)> C 2^N \}.$$
Then, similarly to the previous algorithms, we define

$$\Omega^{l+1}_{-N+1}= \{ (x,y)\in\R^2 : (M-S)_{l+1}(f_{l+1})(x,y)> \frac{C 2^N}{2^1} \}$$
and set

$$\r^{l+1}_{-N+1}= \{ R\in \r : |R\cap\Omega^{l+1}_{-N+1}|>\frac{1}{2 l} |R| \},$$
then define

$$\Omega^{l+1}_{-N+2}= \{ x\in\R^2 : (M-S)_{l+1}(f_{l+1})(x, y))> \frac{C 2^N}{2^2} \}$$
and set

$$\r^{l+1}_{-N+2}= \{ R\in \r\setminus\r^{l+1}_{-N+1} : |R\cap\Omega^{l+1}_{-N+2}|>\frac{1}{2 l} |R| \},$$
and so on, constructing the sets $\{\Omega^{l+1}_{s_{l+1}}\}$ and $\{\r^{l+1}_{s_{l+1}}\}$ such that $\r=\cup_{s_{l+1}}\r^{l+1}_{s_{l+1}}$.

Then we write Part I as

\begin{equation}\label{in5}
\sum_{s_1, ..., s_{l}>-5|\vec{k}|, s_{l+1}>-N} \sum_{R\in \r_{s_1, ..., s_{l+1}}}
\frac{1}{|R|^{(l+1)/2}}
|\langle f_1, \Phi^1_{R_{\n_1}}\rangle| ...
|\langle f_l, \Phi^l_{R_{\n_l}}\rangle|
|\langle f_{l+1}, \Phi^{l+1, \vec{k}}_R\rangle| |R|,
\end{equation}
where $\r_{s_1, ..., s_{l+1}}:= \r^1_{s_1}\cap ... \cap \r^{l+1}_{s_{l+1}}$. Now, if $R$ belongs to
 $\r_{s_1, ..., s_{l+1}}$   this means in particular that $R$ has not been selected at either of the previous $s_{j}-1$ steps for $1\leq j\leq l+1$, which means that $|R\cap\Omega^1_{s_1-1}|\leq \frac{1}{2 l} |R|$,
 ..., $|R\cap\Omega^{l+1}_{s_{l+1}-1}|\leq \frac{1}{2 l} |R|$ or equivalently
$|R\cap\Omega^{1 c}_{s_1-1}|>\frac{2 l -1}{2 l} |R|$, ..., $|R\cap\Omega^{l+1 c}_{s_{l+1}-1}|>\frac{2 l -1}{2 l} |R|$ . But this implies that

\begin{equation}\label{in6}
|R\cap\Omega^{1 c}_{s_1-1}\cap ... \cap \Omega^{l+1 c}_{s_{l+1}-1}     |> \frac{1}{2} |R|.
\end{equation}
In particular, using (\ref{in6}), the term in (\ref{in5}) is smaller than

$$\sum_{s_1, ..., s_l>-5|\vec{k}| , s_{l+1}>-N} \sum_{R\in \r_{s_1, ..., s_{l+1}}}
\frac{1}{|R|^{(l+1)/2}}
|\langle f_1, \Phi^1_{R_{\n_1}}\rangle| ...
|\langle f_l, \Phi^l_{R_{\n_l}}\rangle|
|\langle f_{l+1}, \Phi^{l+1, \vec{k}}_R\rangle|
|R\cap\Omega^{1 c}_{s_1-1}\cap ... \cap \Omega^{l+1 c}_{s_{l+1}-1}| =$$

$$\sum_{s_1, ...,  s_l> -5|\vec{k}|, s_{l+1}>-N} \int_{ \Omega^{1 c}_{s_1-1}\cap ... \cap \Omega^{l+1 c}_{s_{l+1}-1}   }
\sum_{R\in \r_{s_1, ..., s_{l+1}}}
\frac{1}{|R|^{(l+1)/2}}
|\langle f_1, \Phi^1_{R_{\n_1}}\rangle| ...
|\langle f_l, \Phi^l_{R_{\n_l}}\rangle|
|\langle f_{l+1}, \Phi^{l+1, \vec{k}}_R\rangle|
\chi_{R}(x,y)\, dx dy$$

$$\lesssim \sum_{s_1, ..., s_{l+1}> -5|\vec{k}|, s_{l+1}>-N} \int_{  \Omega^{1 c}_{s_1-1}\cap ... \cap \Omega^{l+1 c}_{s_{l+1}-1} \cap
\Omega_{\r_{s_1, ..., s_{l+1}}}}
\prod_{j+1}^{l+1} (M-S)_j(f_j)(x,y)\, dx dy$$

\begin{equation}\label{in7}
\lesssim \sum_{s_1, ..., s_{l+1}> -5|\vec{k}|, s_{l+1}>-N} 2^{5 l |\vec{k}|}
\prod_{j=1}^l \log^2 <\n_j^1> \log^2 <\n_j^2>
2^{-s_1}\cdot ... \cdot2^{-s_{l+1}} |\Omega_{\r_{s_1, ..., s_{l+1}}}|,
\end{equation}
where

$$\Omega_{\r_{s_1, ..., s_{l+1}}}:= \bigcup_{R\in\r_{s_1, ..., s_{l+1}}}R .$$
On the other hand we can write

$$|\Omega_{\r_{s_1, ..., s_{l+1}}}|\leq |\Omega_{\r^1_{s_1}}|\leq
|\{ (x,y)\in\R^2 : MM(\chi_{\Omega^1_{s_1}})(x,y)> \frac{1}{2 l} \}|$$

$$\lesssim |\Omega^1_{s_1}|= |\{ (x,y)\in\R^2 : (M-S)_1(f_1)(x,y)>\frac{C(\vec{k}, \n_1)}{2^{s_1}} \}|\lesssim 2^{s_1 p_1}.$$
Similarly, we have

$$|\Omega_{\r_{s_1, ..., s_{l+1}}}|\lesssim 2^{s_j p_j}$$
for every $1\leq j\leq l$ and also

$$|\Omega_{\r_{s_1, ..., s_{l+1}}}|\lesssim 2^{s_{l+1} \alpha},$$
for every $\alpha >1$. Here we used the fact that all the operators $(M-S)_j$ are bounded
on $L^s$ as long as $1<s< \infty$ and also that $|E'|\sim 1$.
In particular, it follows that

\begin{equation}\label{*}
|\Omega_{\r_{s_1, ..., s_{l+1}}}|\lesssim 2^{s_1 p_1 \theta_1}\cdot ... \cdot 2^{s_l p_l \theta_l} 2^{s_{l+1} \alpha \theta_{l+1}}
\end{equation}
for any $0\leq \theta_1, ..., \theta_{l+1} < 1$, such that $\theta_1+ ... + \theta_{l+1}= 1$.

Now we split the sum in (\ref{in7}) into

\begin{equation}\label{lastt}
\sum_{s_1, ..., s_l> -5|\vec{k}|, s_{l+1}>0} 2^{5 l |\vec{k}|}  \prod_{j=1}^l \log^2 <\n_j^1> \log^2 <\n_j^2>  2^{-s_1}\cdot  ... \cdot 2^{-s_{l+1}} |\Omega_{\r_{s_1, ..., s_{l+1}}}| 
\end{equation}

$$
+ \sum_{s_1, ..., s_l> -5|\vec{k}|, 0>s_{l+1}>-N} 2^{5 l |\vec{k}|}  \prod_{j=1}^l \log^2 <\n_j^1> \log^2 <\n_j^2>  2^{-s_1}\cdot ... \cdot 2^{-s_{l+1}} |\Omega_{\r_{s_1, ..., s_{l+1}}}|.
$$
To estimate the terms in (\ref{lastt}) we use the inequality (\ref{*}) as follows. First, we choose $\theta_1, ..., \theta_l$ small enough so that $1-p_j\theta_j > 0$ for every $1\leq j\leq l$.
Because of this, $\theta_{l+1}$ can become quite close to $1$. To estimate the first term in (\ref{lastt}) we pick $\alpha$ very close to $1$ so that $1-\alpha\theta_{l+1} > 0$, while
to estimate the second term we pick $\alpha$ large enough so that $1-\alpha\theta_{l+1} < 0$

With these choices, the sum in (\ref{lastt})
is at most $O(2^{100 \, l \, |\vec{k}|}  \prod_{j=1}^l \log^2 <\n_j^1> \log^2 <\n_j^2> )$ and this makes the expression in (\ref{split}) to be $O(\prod_{j=1}^l \log^2 <\n_j^1> \log^2 <\n_j^2> )$ as desired, after summing over $\vec{k}\in\N^2$.

This ends our proof.

\section{Logarithmical bounds for the shifted hybrid maximal and square functions}

To complete the proof of the main theorem, we need to demonstrate the logaritmical bounds that have been used for the {\it shifted hybrid maximal and square functions}. As we mentioned before, the arguments of \cite{camil2} and \cite{mptt:biparameter} show that
they would follow from the following logarithmical bound for the vector valued Fefferman-Stein inequality.

\begin{theorem}\label{log}
Let $n\in\Z$ be a fixed integer and denote by $M_n$ the shifted maximal operator associated to $n$. Then, one has

\begin{equation}\label{f-stein}
\|
(\sum_{j=1}^N |M_n f_j|^2 )^{1/2} \|_p \leq C_p \log^2 <n>\cdot
\| (
\sum_{j=1}^N |f_j|^2  )^{1/2} \|_p
\end{equation}
for every $N$ and any $1<p<\infty$.
\end{theorem}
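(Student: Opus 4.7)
The plan is to bootstrap from Stein's scalar estimate $\|M_n f\|_p \lesssim_p \log\langle n\rangle\, \|f\|_p$ (valid for every $1 < p < \infty$; it follows from the weak-$(1,1)$ bound $\|M_n f\|_{L^{1,\infty}} \lesssim \log\langle n\rangle\, \|f\|_1$ of \cite{stein}, Chapter II, together with the trivial $L^\infty$ bound, via Marcinkiewicz interpolation) up to the vector-valued statement (\ref{f-stein}), by running the classical Fefferman--Stein strategy and carefully tracking the dependence on $n$ at each step.

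First I would dispatch $p = 2$ by Fubini: Stein's scalar $L^2$ bound directly gives
\begin{align*}
\Big\|\Big(\sum_{j=1}^N |M_n f_j|^2\Big)^{1/2}\Big\|_2 \lesssim \log\langle n\rangle\, \Big\|\Big(\sum_{j=1}^N |f_j|^2\Big)^{1/2}\Big\|_2,
\end{align*}
which is even one power of $\log\langle n\rangle$ better than (\ref{f-stein}) requires. The whole theorem will then follow from the vector-valued weak-type $(1,1)$ inequality
\begin{align*}
\Big|\Big\{ x : \Big(\sum_j |M_n f_j(x)|^2\Big)^{1/2} > \lambda\Big\}\Big| \lesssim \frac{\log^2\langle n\rangle}{\lambda}\,\Big\|\Big(\sum_j |f_j|^2\Big)^{1/2}\Big\|_1,
\end{align*}
by Marcinkiewicz interpolation with the $L^2(\ell^2)$ bound to cover $1 < p \leq 2$, and by linearizing $M_n$ through a measurable choice of scale and dualizing to cover $p > 2$ (the dual argument lands in $L^{p'}(\ell^2)$ with $p' < 2$, which has already been handled).

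The substance is therefore the vector-valued weak-type $(1,1)$ estimate. The plan here is to apply the Calder\'on--Zygmund decomposition to the scalar function $F(x) := (\sum_j |f_j(x)|^2)^{1/2}$ at height $\lambda$, producing a disjoint family of dyadic cubes $\{Q\}$ and a componentwise splitting $f_j = g_j + b_j$ in the usual way: $g_j = \langle f_j\rangle_Q$ on each selected cube and $g_j = f_j$ off $\cup Q$, while $b_j = (f_j - \langle f_j\rangle_Q)\chi_Q$. The good part is controlled by the $L^2(\ell^2)$ estimate above since $\|(\sum_j |g_j|^2)^{1/2}\|_\infty \lesssim \lambda$ and $\|(\sum_j |g_j|^2)^{1/2}\|_2^2 \lesssim \lambda\, \|F\|_1$, yielding a contribution of $O(\log^2\langle n\rangle\,\lambda^{-1}\|F\|_1)$ by Chebyshev --- exactly the desired bound.

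The main obstacle will be the bad part. The support of $M_n b_j^Q$ is not contained in a bounded dilate of $Q$ but rather in a translate of $Q$ of diameter $\sim \langle n\rangle |Q|$, so the naive union of these supports has measure as large as $\langle n\rangle\,\lambda^{-1}\|F\|_1$, a polynomial-in-$n$ loss that would destroy the bound. The remedy, following the scalar analysis of \cite{camil2} and the dyadic-shift technique of \cite{stein}, is to decompose $M_n$ according to the binary expansion of $n$: writing $n = \sum_k \epsilon_k 2^k$ one splits $M_n$ into $O(\log\langle n\rangle)$ sub-operators, each localized to a range of scales on which the displacement $n\cdot 2^{-k}$ has a definite size relative to the underlying dyadic grid. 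On each sub-operator the Fefferman--Stein bad-part analysis goes through with only bounded loss, and summing the $O(\log\langle n\rangle)$ pieces contributes precisely the one extra $\log\langle n\rangle$ factor which combines with the $\log\langle n\rangle$ from the good part to yield the advertised $\log^2\langle n\rangle$ constant.
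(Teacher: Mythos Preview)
Your $p=2$ case and the good-part Chebyshev estimate in the weak-$(1,1)$ argument are correct and match the paper. However, there are two genuine gaps.

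\textbf{The $p>2$ case.} Linearizing $M_n f_j$ by positive averaging operators $A_j$ and dualizing does \emph{not} reduce matters to the $L^{p'}(\ell^2)$ bound for $M_n$. Duality transforms $\|(A_j f_j)_j\|_{L^p(\ell^2)}\le C\|(f_j)_j\|_{L^p(\ell^2)}$ into the requirement $\|(A_j^{*} g_j)_j\|_{L^{p'}(\ell^2)}\le C\|(g_j)_j\|_{L^{p'}(\ell^2)}$, but the adjoints $A_j^{*}$ are not pointwise dominated by $M_n$ (or by any fixed shifted maximal operator): one can choose the linearization so that $A_j^{*}\chi_{[0,1]}$ blows up logarithmically at the origin. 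Thus the already-established $L^{p'}(\ell^2)$ bound for $(M_n g_j)_j$ says nothing about $(A_j^{*} g_j)_j$. This is precisely why the classical Fefferman--Stein argument for $p>2$ does not proceed by duality but by a weighted inequality $\int|Mf|^2 w\lesssim\int|f|^2 Mw$. The paper proves the shifted analogue via Lemma~\ref{weakl1}: one has $\int|M_n f|^2|\Phi|\,dx\lesssim\int|f|^2\,\M_n\Phi\,dx$ with $\M_n\Phi:=\sum_{k=0}^{[\log_2 n]}M_{-2^k}\Phi$, and the second logarithm then enters through $\|\M_n\|_{q'\to q'}\le\sum_{k\le\log_2 n}\|M_{-2^k}\|_{q'\to q'}\lesssim\sum_{k\le\log_2 n}k\lesssim\log^2\langle n\rangle$.

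\textbf{The weak-$(1,1)$ bad part.} Your binary-expansion decomposition of $M_n$ is not made precise; it is unclear what the sub-operators are or why a bad-part analysis with $O(1)$ loss would hold for each (note also that the classical Fefferman--Stein bad-part estimate for $M$ does not use the mean-zero cancellation of $b_j$). The paper's argument is concrete and different from your sketch: after the Calder\'on--Zygmund stopping on $F$ with selected intervals $\{I_n\}$ and $\Omega=\bigcup I_n$, it replaces $f''_k:=f_k\chi_\Omega$ not by a mean-zero piece but by the absolute-value averages $g_k:=\sum_I\bigl(\tfrac{1}{|I_n|}\int_{I_n}|f_k|\bigr)\chi_{I_n}$, which satisfy $(\sum_k|g_k|^2)^{1/2}\le 2\lambda$ pointwise by Minkowski. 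One then enlarges the exceptional set to $\widetilde\Omega:=\bigcup_I\bigcup_{k=0}^{[\log_2 n]}3I_n^k$ (where $I_n^k$ sits $2^k$ units of length $|I_n|$ to the left of $I_n$), so that $|\widetilde\Omega|\lesssim\log\langle n\rangle\,|\Omega|$, and proves the pointwise domination $M_n f''_k(x)\le M_n g_k(x)$ for every $x\notin\widetilde\Omega$. The bad part thus reduces to the $L^2(\ell^2)$ estimate applied to $(g_k)_k$.
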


\begin{proof}
The proof is a combination of the classical argument of Fefferman and Stein \cite{stein} with the new ideas from \cite{camil2}. A nice description of the Fefferman-Stein inequality
is in Workman \cite{john} and we follow that presentation closely. There are three cases. Clearly, $n$ is supposed to be large, otherwise there is nothing to prove. Assume also that $n$ is positive, since the negative case is completely similar.

\subsection*{Case $1$: $p=2$}

This case is very simple and it follows immediately from the theorem in \cite{camil2} which says that $M_n$ is bounded on $L^2$ (and in fact on any $L^p$) with an operatorial bound of type
$O(\log <n>)$.

\subsection*{Case $2$: $p > 2$}

To understand this case one first needs to observe the following lemma.

\begin{lemma}\label{weakl1}
The following inequality holds

\begin{equation}\label{weakl1l1}
\alpha \cdot \int_{\{x : M_n f(x) > \alpha \}} |\Phi(x)| d x \lesssim \int_{\R} |f(x)| \M_n \Phi(x) d x 
\end{equation}
for every $\alpha > 0$ and measurable functions $f$ and $\Phi$, where $\M_n$ has been defined to be

$$\M_n \Phi (x) := \sum_{k=0}^{[\log_2 n]} M_{- 2^k} \Phi (x).
$$
\end{lemma}

\begin{proof}
To prove this lemma we need to recall a few facts from \cite{camil2}. Denote by $I_n$ maximal dyadic intervals selected with the property that

\begin{equation}
\frac{1}{|I_n|} \int_{I_n} |f(x)| d x > \alpha.
\end{equation}
Clearly, they are all disjoint and their union is equal to $\{x : M f(x) > \alpha\}$. Each $I_n$ comes with $[\log_2 n]$ dyadic intervals of the same length {\it attached} to it, denoted by $I_n^1$, ..., $I_n^{[\log_2 n]}$.
More precisely, $I_n^k$ lies $2^k$ steps of length $|I_n|$ to the left of $I_n$. It has been observed in \cite{camil2} that

$$\{ x : M_n f(x) > \alpha\} \subseteq \bigcup_{I_n} I_n \cup I_n^1 \cup ... \cup I_n^{[\log_2 n]}.$$
Using these, one can majorize the left hand side of (\ref{weakl1l1}) by

$$\alpha\cdot \sum_{I_n} \sum_{k=1}^{[\log_2 n]} \int_{I_n^k} |\Phi(x)| d x$$

$$
\lesssim\sum_{I_n} \sum_{k=1}^{[\log_2 n]} ( \frac{1}{|I_n|} \int_{I_n} |f(y)| d y ) \cdot ( \int_{I_n^k} |\Phi(x)| d x )
$$

\begin{equation}\label{123}
 = \sum_{I_n} \sum_{k=1}^{[\log_2 n]} ( \int_{I_n} |f(y)| d y ) \cdot ( \frac{1}{|I_n|} \int_{I_n^k} |\Phi(x)| d x ).
\end{equation}
Now, for every $y\in I_n$ one can see that

$$ \frac{1}{|I_n|} \int_{I_n^k} |\Phi(x)| d x \lesssim M_{-2^k} \Phi (y).$$
Using this in (\ref{123}) one immediately obtains the desired (\ref{weakl1l1}).

\end{proof}

The result of the above lemma implies that

$$M_n : L^1(\R, \M_n \Phi d x) \rightarrow L^{1,\infty}(\R, |\Phi| d x).$$
Since $\Phi$ can be assumed not to be identically equal to zero, we know that $\M_n \Phi > 0$. In particular, we also have the trivial bound

$$M_n : L^{\infty}(\R, \M_n \Phi d x) \rightarrow L^{\infty}(\R, |\Phi| d x)$$
and by interpolation, we obtain the following $L^2$ estimate

\begin{equation}\label{l2obs}
\int_{\R} |M_n f (x)|^2 |\Phi(x)| d x \lesssim \int_{\R} |f(x)|^2 \M_n \Phi(x) d x.
\end{equation}

Coming back to the proof of Case $2$, since $p > 2$ we know that $q:= p/2 > 1$. By picking an appropriate $\|\Phi\|_{q'} = 1$ and relying on the previous (\ref{l2obs}), one can write

$$
\|(\sum_{j=1}^N |M_n f_j|^2 )^{1/2} \|_p^2 = \|\sum_{j=1}^N |M_n f_j|^2 \|_q = \int_{\R} (\sum_{j=1}^N |M_n f_j|^2) |\Phi| d x 
$$

$$\lesssim 
\int_{\R}
(\sum_{j=1}^N |f_j|^2) \M_n \Phi d x \lesssim \|\sum_{j=1}^N |f_j|^2 \|_q \cdot \|\M_n \Phi\|_{q'} \lesssim \|(\sum_{j=1}^N |f_j|^2)^{1/2} \|_p^2 \cdot \|\M_n\|_{q'\rightarrow q'}.
$$
On the other hand, from the definition of $\M_n$ and the result of \cite{camil2}, we know that

$$\|\M_n\|_{q'\rightarrow q'} \leq \sum_{k=1}^{[\log_2 n]} \|\M_{-2^k}\|_{q'\rightarrow q'} \lesssim \sum_{k=1}^{[\log_2 n]} \log 2^k \lesssim \log^2 <n>
$$
which completes Case $2$.

\subsection*{Case $3$: $1 < p < 2$}

The idea here is to prove the following end point case

\begin{equation}\label{endpoint}
\|
(\sum_{j=1}^N |M_n f_j|^2 )^{1/2} \|_{1,\infty} \lesssim \log^2 <n>\cdot
\|(\sum_{j=1}^N |f_j|^2  )^{1/2} \|_1
\end{equation}
directly and then to apply standard vector valued interpolation with the corresponding $L^2$ estimate.

To prove (\ref{endpoint}), let $\alpha > 0$ and denote by $F(x) := (\sum_{j=1}^N |f_j(x)|^2  )^{1/2}$. Select maximal dyadic intervals $I_n$ with the property

$$\frac{1}{|I_n|} \int_{I_n} F(x) d x > \alpha.
$$
As before, we like to think of each $I_n$ as being related to the dyadic interval $I$, having the same length as $I_n$ and lying $n$ steps of length $|I_n|$ to the left of it. If we denote by
$\Omega := \bigcup_I I_n$ one has as usual

\begin{equation}\label{set}
|\Omega|  = \sum_I |I_n| \leq \frac{1}{\alpha} \sum_I \int_{I_n} F(x) d x \leq \frac{1}{\alpha} \|F\|_1.
\end{equation}
Observe that $F \leq \alpha$ on $\Omega^c$ and also that

$$\alpha < \frac{1}{|I_n|} \int_{I_n} F(x) d x \leq 2 \alpha$$
because of the maximality of $I_n$.

Split now each $f_k$ as $f_k = f'_k + f''_k$ where $f'_k := f_k\chi_{\Omega^c}$ and $f''_k:= f_k\chi_{\Omega}$.

\subsection*{Contribution of $\{f'_k\}$}

One can write 

$$|\{ x : (\sum_j |M_n f'_j(x)|^2)^{1/2} > \alpha/2 \}| \leq \frac{1}{\alpha^2}\| (\sum_j |M_n f'_j(x)|^2 )^{1/2}\|_2^2$$

$$\lesssim \frac{1}{\alpha^2} \log^2 <n> \| (\sum_j |f'_j(x)|^2 )^{1/2}\|_2^2 \lesssim \log^2 <n> \frac{1}{\alpha^2} \int_{\Omega^c} F^2(x) d x  \leq \log^2 <n> \frac{1}{\alpha}\|F\|_1$$
as desired.

\subsection*{Contribution of $\{f''_k\}$}

To estimate the corresponding contribution for $\{f''_k\}$, we have to be a bit more careful. Define first the functions $g_k$ by

\begin{equation}
g_k:= \sum_I (\frac{1}{|I_n|} \int_{I_n} |f_k(x)| d x ) \cdot \chi_{I_n}
\end{equation}
and after that $G(x):= (\sum_j |g_j(x)|^2 )^{1/2}$. Fix $x\in I_n$ and observe that by Minkowski's inequality one can write

$$G(x) = \left (\sum_ j (\frac{1}{|I_n|} \int_{I_n} |f_j(y)| d y )^2 \right)^{1/2}\leq
\frac{1}{|I_n|} \int_{I_n}(\sum_j |f_j(y)|^2)^{1/2} d y = \frac{1}{|I_n|} \int_{I_n} F(y) d y \leq 2 \alpha .$$
Using that $G$ is supported in $\Omega$ and arguing as before, we have

$$|\{ x : (\sum_j |M_n g_j (x)|^2)^{1/2} > \alpha/2 \}| \lesssim \frac{1}{\alpha^2} \log^2 <n> \| (\sum_j |g_j(x)|^2 )^{1/2}\|_2^2 $$

$$
= \frac{1}{\alpha^2} \log^2 <n> \| G \|_2^2 \lesssim \log^2 <n> |\Omega| \leq \frac{1}{\alpha} \log^2 <n> \|F\|_1 .
$$
We would like now to compare $M_n f''_k (x)$ with $M_n g_k (x)$ if possible. Denote by $\widetilde{\Omega}$ the set

$$\widetilde{\Omega} := \bigcup_I 3 I_n \cup 3 I_n^1 \cup ... \cup 3 I_n^{[\log_2 n]}$$
and observe that $|\widetilde{\Omega}| \lesssim \log <n> |\Omega|$. We will prove that for every $x\in \widetilde{\Omega}^c$ one has

\begin{equation}\label{point}
M_n f''_k (x) \leq M_n g_k (x)
\end{equation}
and this will clearly allow us to reduce the contribution of $\{f''_k\}$ to the contribution of $\{g_k\}$ which has been understood earlier.
Fix $x\in \widetilde{\Omega}^c$ and $x\in J$ a dyadic interval, so that the corresponding $\frac{1}{|J_n|} \int_{J_n} |f''_k (y) d y$ is different from zero. In particular, $J_n$ has to intersect $\Omega$ which is the
support of $f''_k$. Suppose now that $I$ is so that $J_n \cap I_n \neq \emptyset$. Then, one must have $I_n\subseteq J_n$ as the other alternative $J_n\subseteq I_n$ is not possible since $x\in \widetilde{\Omega}^c$.
But his implies that 

$$\frac{1}{|J_n|} \int_{J_n} |f''_k(x)| d x = \frac{1}{|J_n|} \int_{J_n} |g_k(x)| d x$$
which is enough to guarantee (\ref{point}) and end our proof.

\end{proof}

\section{Generalizations}

The goal of this section is to point out that virtually all the earlier generalizations that we described in the previous \cite{camil2} and \cite{camil3}, 
have {\it natural} extensions in this {\it multi-parameter world} and can be proved by the same method. We give here just two {\it samples} and leave the rest (and the straightforward details) to the imaginative reader.
Suppose for simplicity that we are in $\R^2$ and denote by $D_1:= \frac{\partial}{\partial x_1}$ and $D_2:= \frac{\partial}{\partial x_2}$. A direct computation shows that the {\it double commutator} $[|D_2|, [|D_1|, A]]$
can be rewritten as

\begin{equation}
 [|D_2|, [|D_1|, A]] f(x) = p.v. \int_{\R^2} f(x+t) \left( \frac{\Delta^{(1)}_{t_1}}{t_1} \circ \frac{\Delta^{(2)}_{t_2}}{t_2} A(x)\right) \frac{ d t_1}{t_1} \frac{d t_2}{t_2}
\end{equation}
which is precisely the {\it bidisc} extension of the first commutator of Calder\'{o}n. There is of course a similar formula available in every dimension.

\begin{theorem}
Let $a_1, ..., a_n$ be real numbers, all different from zero. The expression 

$$p.v. \int_{\R^n} f(x+t )\left( \frac{\Delta_{a_1 t_1}^{(1)}}{t_1} \circ ... \circ\frac{\Delta_{a_n t_n}^{(n)}}{t_n} A(x)\right)\frac{d t_1}{t_1} ... \frac{d t_n}{t_n}$$
viewed as a bilinear map in $f$ and $\frac{\partial^n A}{\partial x_1 ... \partial x_n}$ is bounded from $L^p\times L^q$ into $L^r$ for every
$1<p, q \leq \infty$ with $1/p+1/q=1/r$ and $1/2 < r < \infty$.
\end{theorem}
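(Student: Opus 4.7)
The plan is to mimic the argument developed in Sections~2--5, which treats the case $\vec{a}=(1,\dots,1)$ (written out for $n=2$ but stated to work for general $n$). First, repeating the Fourier calculation leading to (\ref{multiplier})--(\ref{product}), with the finite differences $\Delta^{(i)}_{a_{i} t_{i}}$ in place of $\Delta^{(i)}_{t_{i}}$, produces a bilinear multiplier representation of the operator (for $d=1$ and $n$ parameters) with symbol
$$m_{\vec{a}}(\xi,\eta) = \prod_{i=1}^{n} \int_{0}^{1} \sgn(\xi_{i} + \alpha_{i}\, a_{i}\, \eta_{i})\, d\alpha_{i},$$
where $(\xi,\eta)\in \R^{n}\times \R^{n}$ are the frequency variables of $f$ and of $a := \partial^{n}A/\partial x_{1}\cdots\partial x_{n}$, respectively. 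This is precisely the tensor-product symbol (\ref{product}) (at $d=1$) evaluated at $(\xi_{1},\dots,\xi_{n},a_{1}\eta_{1},\dots,a_{n}\eta_{n})$; equivalently, the new operator is obtained from the standard $n$-parameter first Calder\'on commutator $C_{n,1}$ by an invertible anisotropic rescaling of the frequency variable of $a$, which on the physical side amounts to a coordinate dilation of $a$ (with a harmless sign flip when some $a_{i}<0$).

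Given this identification, I would carry out the reduction of Section~3 essentially verbatim. The Littlewood-Paley decompositions (\ref{c})--(\ref{d}) and the $\Psi$-type splitting exploiting the frequency constraints $\xi + \xi_{1} + \xi_{2} = 0$ and $\eta + \eta_{1} + \eta_{2} = 0$ are unchanged. The double Fourier series expansion (\ref{i}) is now computed with $\sgn(\widetilde{\xi} + \alpha_{1} a_{1} \xi_{1})$ in place of $\sgn(\widetilde{\xi} + \alpha_{1}\xi_{1})$; by the same computation as in \cite{camil3}, the resulting coefficients $C^{r_{1}}_{\widetilde{n},\widetilde{n_{1}}}$ still satisfy the quadratic decay (\ref{ii}), with constants depending only on $\max_{i}|a_{i}|$ and $\min_{i}|a_{i}|^{-1}$, both of which are bounded since the $a_{i}$ are fixed nonzero reals. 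Plugging the resulting expansion into the model operator framework of Section~2 produces exactly the discretized multilinear objects treated in Theorem~\ref{discrete}, the shift parameters $\n_{j}$ being determined by the Fourier indices together with the rescalings $\eta_{i}\mapsto a_{i}\eta_{i}$.

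One then applies Theorem~\ref{discrete} together with the Banach/quasi-Banach interpolation argument of Section~3 to conclude the bilinear estimates $L^{p}\times L^{q}\to L^{r}$ in the claimed range $1 < p, q \leq \infty$, $1/p + 1/q = 1/r$, $1/2 < r < \infty$. The main obstacle is not conceptual but one of careful bookkeeping: one has to verify that the logarithmic factors produced by Theorem~\ref{log} and its hybrid bi- and $n$-parameter analogues, as well as the quadratic Fourier-coefficient decay, remain uniformly bounded under the anisotropic rescaling, and that the resulting bounds remain summable after one sums over the discrete shift parameters (in particular that no blow-up occurs as any $|a_{i}|\to 0$ or $\infty$, which is precluded by the hypothesis that $a_{1},\dots,a_{n}$ are fixed nonzero reals). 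Once these uniformities are checked, the argument closes in exactly the same way as the proof of Theorem~\ref{main1}.
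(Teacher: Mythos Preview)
Your proposal is correct and matches the paper's own treatment: the paper does not give a separate proof of this theorem but states in Section~6 that all such generalizations ``can be proved by the same method,'' and your outline---computing the tensor-product symbol with the rescaled frequencies $a_i\eta_i$, then rerunning the decomposition of Section~3 and invoking Theorem~\ref{discrete}---is exactly that method. The only minor point worth flagging is that Theorem~\ref{main1} as stated restricts to $p\geq 1$ (because of the polynomial-in-$d$ bookkeeping), whereas here $d=1$ is fixed and the full quasi-Banach range $1/2<r<\infty$ comes directly from Theorem~\ref{discrete} and the interpolation of Section~3, as you indicate.
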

The particular case $q= \infty$ is in Journ\'{e} \cite{j1} but the rest of the estimates seem to be new. 

Then, one can also observe by a direct calculation that

$$[ |D_2|,    [ |D_1|,  [|D_2|, [|D_1|, A]]]] f(x)$$ 

$$
=p.v. \int_{\R^4} f(x+t+s)
\left(\frac{\Delta_{t_1}^{(1)}}{t_1}\circ\frac{\Delta_{t_2}^{(2)}}{t_2}\circ\frac{\Delta_{s_1}^{(1)}}{s_1}\circ\frac{\Delta_{s_2}^{(2)}}{s_2} A(x)\right)
\frac{d t_1}{t_1}\frac{d t_2}{t_2}\frac{d s_1}{s_1}\frac{d s_2}{s_2}
$$
which is the {\it bidisc} analogue of an operator introduced in \cite{camil3}. As we promised, we record now the following theorem.

\begin{theorem}
Let $F$ be an analytic function on a disc of a certain radius centered at the origin in the complex plane and $A$ a complex valued function in $\R^2$ so that $\frac{\partial^4 A}{\partial x_1^2 \partial x_2^2} \in L^{\infty}(\R^2)$
with an $L^{\infty}$ norm strictly smaller than the radius of convergence of $F$. Then, the linear operator

$$f\rightarrow p.v. \int_{\R^4} f(x+t+s)
F \left(\frac{\Delta_{t_1}^{(1)}}{t_1}\circ\frac{\Delta_{t_2}^{(2)}}{t_2}\circ\frac{\Delta_{s_1}^{(1)}}{s_1}\circ\frac{\Delta_{s_2}^{(2)}}{s_2} A(x)\right)
\frac{d t_1}{t_1}\frac{d t_2}{t_2}\frac{d s_1}{s_1}\frac{d s_2}{s_2}
$$
is bounded on $L^p(\R^2)$ for every $1<p<\infty$.
\end{theorem}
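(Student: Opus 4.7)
The plan is to extend the proof of Theorem \ref{main} to this ``doubly commuted'' setting. First, expand $F$ as a power series $F(z) = \sum_{d\ge 0} c_d z^d$; since by hypothesis $\|a\|_\infty$ is strictly smaller than the radius of convergence, where $a := \partial^4 A/\partial x_1^2 \partial x_2^2$, the reduction leading to (\ref{poli}) shows that it suffices to prove polynomial-in-$d$ bounds for the $(d+1)$-linear operator $\widetilde{C}_d$ arising from $F(z) = z^d$. Using
$$\frac{\Delta_{t_1}^{(1)}}{t_1}\circ\frac{\Delta_{t_2}^{(2)}}{t_2}\circ\frac{\Delta_{s_1}^{(1)}}{s_1}\circ\frac{\Delta_{s_2}^{(2)}}{s_2} A(x) = \int_{[0,1]^4} a(x_1+\alpha_1 t_1+\gamma_1 s_1,\, x_2+\alpha_2 t_2+\gamma_2 s_2)\, d\alpha_1\, d\alpha_2\, d\gamma_1\, d\gamma_2$$
and Fourier-expanding $f$ and the $d$ copies of $a$, I would verify that the symbol of $\widetilde{C}_d$ factorizes as
$$\widetilde{m}_d(\xi, \xi_1, \ldots, \xi_d, \eta, \eta_1, \ldots, \eta_d) = m_{1,d}(\xi, \xi_1, \ldots, \xi_d)^2 \cdot m_{1,d}(\eta, \eta_1, \ldots, \eta_d)^2,$$
with $m_{1,d}$ the one-dimensional Calder\'{o}n commutator symbol from \cite{camil3}. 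The squaring reflects the four $\sgn$-factors produced by integrating against $\frac{dt_1}{t_1}\frac{dt_2}{t_2}\frac{ds_1}{s_1}\frac{ds_2}{s_2}$; the $[0,1]$-averages decouple because $\alpha$ and $\gamma$ are independent variables.

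With this identification, I would rerun the generic decomposition of Section 3 with $\widetilde{m}_d$ in place of $m_{2,d}$ from (\ref{product}). The Littlewood-Paley decompositions and the cancellation identities $\xi+\sum\xi_j=0$, $\eta+\sum\eta_j=0$ apply verbatim, producing the same $O(d^2)$ biparameter paraproduct-like pieces and the same family of nine joint Cases Case $i\otimes$ Case $j$. The new feature appears in the Fourier-series step: after the compact Littlewood-Paley splitting in $(\widetilde\xi, \xi_1)$ (respectively $(\widetilde\eta, \eta_1)$), one must expand $m_{1,d}^2$ rather than a single $m_{1,d}$. The cleanest approach is to expand each of the two copies of $m_{1,d}$ separately via the Fourier series (\ref{i}) with coefficients obeying the quadratic decay (\ref{ii}), and to multiply the two series; this yields a Fourier expansion with four shift parameters per direction (eight in total), each of which inherits the quadratic Fourier-coefficient decay.

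After freezing the $L^\infty$ inputs via the pointwise bound (\ref{perfect}), each resulting piece is a shifted biparameter paraproduct carrying roughly twice as many shift parameters as in the proof of Theorem \ref{main}, but with $\Psi$-type cancellation in the same positions. Banach estimates follow from the logarithmic bounds for the shifted hybrid maximal and square functions derived via (\ref{ss}), while quasi-Banach estimates follow, as in Case $1'_a\otimes$ Case $1'_a$, by discretization in $(x_1,x_2)$ and an appeal to Theorem \ref{discrete}, whose polylogarithmic dependence on every shift is absorbed by the quadratic coefficient decay. The factors $2^{r_1}, 2^{r_2}$ produce geometric summability in the scale parameters, and interpolation between Banach and quasi-Banach ranges yields the polynomial-in-$d$ estimate for $\widetilde{C}_d$. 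Summing against the coefficients $c_d$ of $F$ finishes the argument.

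The main obstacle I anticipate is purely organizational: the doubled shift structure multiplies both the number of sub-cases and the complexity of each one, and at each stage one has to verify that the multi-shifted versions of the hybrid maximal and square functions of Section 5 still obey polylogarithmic bounds in their extra shift parameters. Since the only properties of $m_{1,d}$ used in Sections 3--5 are the Fourier-series expansion (\ref{i}) and the decay (\ref{ii}), both of which are stable under multiplication of two copies, no genuinely new analytical ideas should be required beyond a careful reworking of the arguments of those sections. The remaining joint Cases $i\otimes j$ adapt analogously, in keeping with the author's remarks at the end of Section 3.
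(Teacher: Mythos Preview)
Your proposal is correct and matches the paper's intended approach: the paper does not give a detailed proof of this theorem but states explicitly that it (along with the other results in Section 6) ``can be proved by the same method'' and is the \emph{bidisc analogue} of an operator already treated in \cite{camil3}. Your identification of the symbol as $m_{1,d}^2\otimes m_{1,d}^2$ is exactly right, and the strategy of expanding each copy of $m_{1,d}$ separately via (\ref{i})--(\ref{ii}), thereby doubling the number of shift parameters while retaining the same quadratic Fourier-coefficient decay, is precisely the adjustment the paper has in mind when it says the ``straightforward details'' are left to the reader.

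One small clarification worth keeping in mind as you write this up: when you carry out the Fourier-series step, the two copies of $m_{1,d}$ in each coordinate direction come with \emph{independent} averaging parameters (your $\alpha$'s versus your $\gamma$'s), so the two ``tilde'' variables $\widetilde\xi = \xi+\alpha_2\xi_2+\cdots$ and $\widetilde\xi' = \xi+\gamma_2\xi_2+\cdots$ are genuinely different. This forces you to introduce a second compact Littlewood--Paley parameter $r_1'$ alongside $r_1$ (and similarly $r_2'$ alongside $r_2$), each contributing its own geometric factor $2^{r_1'}$, $2^{r_2'}$. You have implicitly accounted for this in your remark about ``doubled shift structure,'' but it is worth making explicit that the resulting averaging formula (the analogue of (\ref{m})) will involve \emph{two} $\Phi_0(t)$-type integrals per coordinate direction, consistent with the generalized version of Lemma~\ref{average} alluded to in the paper. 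With that bookkeeping in place, Theorem~\ref{discrete} and the logarithmic bounds of Section~5 apply without change.
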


\end{document}